\numberwithin{equation}{section}
\theoremstyle{plain}
\newtheorem{thm}{Theorem}[section]
\newtheorem{prop}[thm]{Proposition}
\newtheorem{lemma}[thm]{Lemma}
\newtheorem{remark}[thm]{Remark}
\theoremstyle{definition}
\numberwithin{equation}{section}    
\newcommand{\e}{\varepsilon}
\newcommand{\ealpha}{\varepsilon^\alpha}
\newcommand{\loc}{\textnormal{loc}}
\newcommand{\dist}{\textnormal{dist}}
\newcommand{\supp}{\textnormal{supp}}
\renewcommand{\hom}{\textnormal{hom}}
\newcommand{\per}{\textnormal{per}}
\newcommand\wts{\overset{\text{w2}}{\rightharpoonup}}
\newcommand\sts{\overset{\text{s2}}{\to}}
\newcommand{\R}{\mathbb{R}}
\newcommand{\s}{\mathbb{S}}
\newcommand{\Y}{\mathcal{Y}}
\newcommand{\A}{\mathcal{A}}
\def\div{{\operatorname{div}}}
\newcommand{\dx}{\,\mathrm{d}x}
\newcommand{\dif}{\,\mathrm{d}}
\newcommand{\Id}{\,\mathrm{Id}}	
\def\esssup{\mbox{ess-sup}}
\newenvironment{samuelerev}{\color{purple}}{\color{black}}
\newcommand{\bsamr}{\begin{samuelerev}}
\newcommand{\esamr}{\end{samuelerev}}
\title[Homogenization and linearization in magnetoelasticity]{Homogenization and linearization in magnetoelasticity under small elastic response}
\author[M.\,Cherdantsev]{Mikhail Cherdantsev}
\address[M.\,Cherdantsev]{Cardiff University, Mathematical Analysis Research Group, Senghennydd Road, Cathays, CF24 4AG Cardiff, UK}
\email{CherdantsevM@cardiff.ac.uk}
\author[E.\,Davoli]{Elisa Davoli}
\address[E.\,Davoli]{Tu Wien, Institute for Analysis and Scientific Computing, Wiedner Hauptstra\ss{}e 8-10, 1040 Wien, Austria}
\email{elisa.davoli@tuwien.ac.at}
\author[L.\,D'Elia]{Lorenza D'Elia}
\address[L.\,D'Elia]{Tu Wien, Institute for Analysis and Scientific Computing, Wiedner Hauptstra\ss{}e 8-10, 1040 Wien, Austria}
\email{lorenza.delia@tuwien.ac.at}
\author[S.\,Riccò]{Samuele Riccò}
\address[S.\,Riccò]{Tu Wien, Institute for Analysis and Scientific Computing, Wiedner Hauptstra\ss{}e 8-10, 1040 Wien, Austria}
\email{samuele.ricco@tuwien.ac.at}
\subjclass[2010]{35B27, 74Q05, 74B20, 49J45}
\keywords{Homogenization, Linearization, Magnetoelasticity, Non-Impenetrability}
\begin{document}

\begin{abstract}
    We perform a simultaneous homogenization and linearization analysis for a magnetoelastic energy functional featuring a mixed Eulerian-Lagrangian structure. Neglecting Zeeman and anisotropic contributions, we characterize the asymptotic behavior in the sense of Gamma-convergence for the sum of a nonlinear magnetoelastic energy, a symmetric exchange term defined on the actual configuration, and for the associated magnetostatic self-energy. After establishing compactness of displacements and magnetizations with equibounded energy, we identify the limiting energy functional as the sum of a quadratic homogenized magnetoelastic contribution with a limiting homogenized exchange and magnetostatic term. This is, to the authors' knowledge, the first homogenization result for manifold-valued mixed Eulerian-Lagrangian energies.
\end{abstract}

\maketitle


\section{introduction}

In this work, we present a simultaneous linearization and homogenization analysis for magnetoelastic materials. This can be seen as a first step towards extending the seminal commutability results in the nonlinear elastic setting \cite{MuellerNeukamm, GloriaNeukamm} to the magnetoelastic regime.

The mathematical modeling of multiphysics couplings is at the center of a vigorous research effort, owing to the wide range of potential application, spanning, among others, sensors and actuators, as well as bioengineering devices. A crucial mathematical hurdle in describing such nonlinear couplings is the mixed Eulerian-Lagrangian contributions  in  the energy functionals, featuring energy densities defined both on reference configuration and on the actual one.

We consider a model for a periodic ferromagnetic elastic composite under a small elastic response regime. Starting from a fully non-linear elasticity setting combined with magnetization, we pass to the limit as both the size of the periodicity and the elastic energy tend to zero. This leads to simultaneous linearization in elastic properties and homogenization in elasticity and magnetization via the $\Gamma$-convergence technique.  

The variational theory of static magnetostriction (cf., e.g.,\cite{brown}) stems from the assumption that equilibria minimize an underlying energy functional, depending both on elastic deformations of the reference configuration $w:\Omega \to \Omega^{w} \subset \R^3$ and on the magnetization  $m:\Omega^{w}\to\mathbb{S}^2$, where $\mathbb{S}^2$ denotes the unit sphere in $\R^3$ and $\Omega^{w}$ is the deformed set. A simplified  energy functional in the case of a single-crystal magnet reads as
\begin{equation}
    \label{eqn:energy-E-0}
    G(w,m) := \int_\Omega W(\nabla w, m\circ w) \, \dx + a \int_{\Omega^w}|\nabla m|^2\,{\rm d}\xi + \frac{\mu_0}{2} \int_{\R^3}|\nabla \psi_m|^2\,{\rm d}\xi. 
\end{equation}
Here, $W$ encodes a nonlinear, frame-indifferent, magnetostrictive energy density. The second term in \eqref{eqn:energy-E-0} describes the magnetic exchange, penalizing the spatial changes of $m$, while  the last term  encodes the contributions of the magnetic stray field $\psi_m$. The parameters $a>0$ and $\mu_0>0$ are the exchange constant and the permeability of the vacuum, respectively. The stray field potential $\psi_m$ is the weak solution of the Maxwell equation 
\begin{equation}
\label{maxwell}
    \div(\chi_{w(\Omega)} m ) = \mu_0 \Delta \psi_m \quad \textnormal{ in } \mathcal{D}'(\R^3),
\end{equation}
where $\chi_{w(\Omega)} m$ denotes the extension of $m$ by $0$ to the whole space. In particular, $\psi_m$ can be seen as an element of the Beppo-Levi space $L^{1,2}(\R^3)$, see \eqref{eq:BeppoLevi} below. The existence of this weak solution follows from \cite[Proposition 8.8]{BHMC17}, see also \cite[Lemma 4.2]{B21}. For ease of presentation, in \eqref{eqn:energy-E-0} and throughout this paper we disregard the energy contributions due to crystalline anisotropy, antisymmetric exchange and Zeeman energy, since they behave as continuous perturbations and could easily be included in our analysis the  without further mathematical difficulties. 

Well-posedness of the minimization problem for the functional in \eqref{eqn:energy-E-0} in suitable classes of admissible deformations and magnetizations has first been established in \cite{rybka.luskin} under a higher-order regularization corresponding to the setting of non-simple materials. Existence results in the absence of hessian terms have first been proven in \cite{kruzik.stefanelli.zeman}, assuming incompressibility of the admissible deformations, as well as in \cite{BHMC17} under weaker coercivity conditions on the energy density and weaker notions of deformed sets. Further improvements have been achieved in \cite{BrescianiDavoliKruzik, Bresciani, BrescianiThesis}. We also mention \cite{BrescianiFriedrichMora-Corral} and \cite{BrescianiFriedrich} for a general study of Eulerian-Lagrangean models accounting for material failure, as well as for a characterization of the emergence of domain structures. A linearization analysis has been undertaken in \cite{AKM24}. A homogenization analysis in a linear setting under the presence of rigid magnetic inclusions has been carried out in the recent preprint \cite{GrandeKromerKruzikTomassetti}.

In order to set the problem we first specify some  notation. Let $\Omega \subset \R^3$ be a bounded Lipschitz domain in $\R^3$, and let $\gamma$ be an open subset of $\partial \Omega$ such that $\mathcal{H}^2(\gamma) > 0$, where $\mathcal{H}^2$ denotes the $2$-dimensional Hausdorff measure. From now on we assume $\Omega$ to be the reference configuration of a composite magnetoelastic material. Let also $p\in \mathbb{N}$ be such that $p>3$, and let $Y := [0,1)^3$ be the unit cube in $\R^3$ which, with a slight abuse of notation, we implicitly identify with the unit torus. We denote by $g$  a  function in $W^{1,\infty}(\Omega;\R^3)$ encoding the boundary conditions on $\gamma$. Further, we  denote by $\{e_i\}_{i=1}^3$  the canonical basis of $\R^3$. Given a vector $u \in \R^3$ we indicate its components by $u = (u^1, u^2, u^3)$, and adopt the symbol $(\cdot|\cdot|\cdot)$ to describe the columns of a matrix. We will use a standard notation for the Beppo-Levi space 
\begin{equation}
\label{eq:BeppoLevi}
    L^{1,2}(\R^3) := \{u\in L^2_\loc(\R^3) \ : \ \nabla u \in L^2(\R^3; \R^3) \}.
\end{equation}
We recall that this is an Hilbert space, modulo constant functions (see \cite[Corollary 1.1]{DL54}), equipped with the inner product
\begin{equation*}
   (u,v) := \int_{\R^3} \nabla u \cdot \nabla v \dx.
\end{equation*}
For a given sequence $(v_\e)_\e$ and for $r\in \mathbb{N}$, we will write $v_\e \wts v$ weakly in $L^r(\Omega \times Y)$ to indicate that $(v_\e)_\e$ converges weakly two-scale in $L^r(\Omega\times Y)$ to $v$. Analogously, we will use the notation $v_\e \sts v$ strongly in $L^r(\Omega\times Y)$ to indicate its strong two-scale convergence. We will often indicate by $C$ a generic positive constant, whose value may change from line to line.

Following \cite{AD15} we work under the assumption of the strong coupling condition, ensuring that the admissible magnetizations do not jump across the interface between different crystal magnets. Assuming a periodicity scale $\e$, our stored energy is thus described by the functional
\begin{equation}
\label{eq:en-hom}
\begin{aligned}
    G_\e(w,m)
    :=& \int_\Omega W \left(\frac{x}{\e}, w(x), m \circ w(x) \right) \dx \\
    +& \int_{\Omega^w} a\left(\frac{w^{-1}(z)}{\e}\right) |\nabla m(z)|^2 \dif z + \frac{\mu_0}{2} \int_{\R^3} |\nabla \psi_m|^2 \dx,
\end{aligned}
\end{equation}
for pairs of admissible deformations and magnetizations $(w,m)\in \mathcal{A}$, and with $G_\e(w,m)=+\infty$ otherwise. In the expression above, $W : \R^3 \times \R^{3 \times 3} \times \R^3 \to [0,+\infty)$, is a Carath\'eodory function satisfying the following assumptions:

\begin{enumerate}[label= {\rm  (H\arabic*)}]
    \item\label{H1} {\bf Periodicity:} $W(\cdot, F, \nu)$ is $Y$-periodic for any $F \in \R^{3 \times 3}$ and for any $\nu \in \R^3$.
    
    \item\label{H2} {\bf Coercivity:} there exists a constant $C > 0$ such that 
    \begin{equation*}
        W(y, F, \nu) \ge C \, \left[\dist^2(F; SO(3)) \vee \dist^p(F; SO(3)) \right],
    \end{equation*}
    for some $p > 3$ and for any $F \in \R^{3 \times 3}$, $\nu \in \R^3$ and a.e. $y \in \R^3$. Moreover we assume
    \begin{equation*}
        W(y, F, \nu) \ge h(\det F)
    \end{equation*}
    for any $F \in \R^{3 \times 3}$, $\nu \in \R^3$ and a.e. $y \in \R^3$, where $h : (0, +\infty) \to [0, +\infty)$ is a Borel function satisfying
    \begin{equation}
    \label{grwt-h}
        \lim_{t \to 0^+} t h(t) = +\infty.
    \end{equation}
    In particular, we choose $h(t) = t^{-s}$ where $s > \frac{p}{p-2}$.
    
    \item\label{H3} {\bf Frame indifference:} $W(y, RF, R\nu) = W(y, F, \nu)$ for any $R \in SO(3)$, for any $F \in \R^{3 \times 3}$, for any $\nu \in \R^3$ and for a.e. $y \in \R^3$.
    
    \item\label{H4} {\bf Normalization:} $W(y, \Id, \nu) = 0$ for any $\nu \in \R^3$ and for a.e. $y \in \R^3$.
    
    \item\label{H5} {\bf Taylor expansion:} $W$ has a second--order Taylor expansion centered at the identity, namely
    \begin{equation*}
        W(y, \Id + F, \nu) = \frac{1}{2} \, Q(y, F, \nu) + r(y, F, \nu),
    \end{equation*}
    for any $F \in \R^{3 \times 3}$, for any $\nu \in \R^3$ and for a.e. $y \in \R^3$. Here $Q: \R^3 \times \R^{3 \times 3} \times \R^3 \to [0, +\infty)$ is a non-negative Carathéodory function which is $Y$-periodic in $y$ and quadratic in $F$. Moreover, we require that $Q$ is bounded in the following sense: for any $F \in \R^{3 \times 3}$ and for any $\nu \in \R^3$,
        \begin{equation}
            \notag
            \underset{y \in \R^3}{\esssup} \ Q(y, F, \nu) \le C \, |F|^2,
        \end{equation}
    for some constant $C > 0$. We further require that for any $\nu \in \R^3$ and for a.e. $y \in \R^3$, $r(y, F, \nu) \le \tilde{r}(|F|)|F|^2$ as $|F| \to 0$, for a suitable continuous function $\tilde{r}$.
\end{enumerate}

We observe that condition \eqref{grwt-h} is a strengthening of the assumption that
\begin{equation*}
    \lim_{t \to 0^+} h(t) = +\infty,
\end{equation*}
which is fundamental from the modeling point of view, for it ensures that the elastic energy blows up in the case of extreme compressions. 
%
%

The map $a : \R^3 \to \R$ is a $Y$-periodic function such that $a \in L^\infty(Y)$, and there exist $C_1, C_2 > 0$ for which 
\begin{equation}
\label{grw-a}
    0 < C_1 \le a(y) \le C_2 \quad \textnormal{ for a.e. }  y \in \R^3. 
\end{equation}
We define the class $\mathcal{A}$ of admissible deformations and magnetizations as
\begin{equation*}
\begin{aligned}
    \mathcal{A} := \bigg\{&(w,m)\in W^{1,p}_g(\Omega;\R^3)\times H^1(\Omega^w;\mathbb{S}^2) \ : \ \det \nabla w > 0 \textnormal{ a.e. in } \Omega,
    \\
    & w \textnormal{ is a.e. injective in } \Omega, \ |\det\nabla w|^{-1} \in L^s(\Omega) \ \textnormal{ for } s > \frac{p}{p-2}\bigg\},
\end{aligned}
\end{equation*}
where
\begin{equation*}
    W^{1,p}_g(\Omega; \R^3) := \{ v \in W^{1,p}(\Omega; \R^3) \ : \ v = g \quad \mathcal{H}^2\textnormal{-a.e. on } \gamma \}.
\end{equation*}
Note that we work under the assumption $p>3$ in order for the admissible deformations to have a continuous representative, {\it i.e.} for the deformed configuration to be well-defined point-wise. 

In the definition above and in \eqref{eq:en-hom}, the deformed set $\Omega^w$ is defined as $\Omega^w := w(\Omega) \setminus w(\partial \Omega)$. Note that, in principle, even for deformations $w$ in our class, the set $w(\Omega)$ might not be open, whereas $\Omega^w$ is always an open set, cf. \cite[Lemma 2.1]{BrescianiDavoliKruzik}. We will use the notation
\begin{equation}
\label{eq:class-Y}
\begin{aligned}
    \Y := \bigg\{ &w\in W^{1,p}_g(\Omega; \R^3) \ : \ \det \nabla w > 0 \textnormal{ a.e. in } \Omega, \ w \textnormal{ is a.e. injective in } \Omega, 
    \\
    & |\det\nabla w|^{-1} \in L^s(\Omega) \quad \textnormal{ for some } s > \frac{p}{p-2} \bigg\}.
\end{aligned}
\end{equation}
 for the class of admissible deformations only.


In the absence of a deformation, assuming that the temperature of the sample is fixed below the Curie threshold, the magnetization satisfies the Heisenberg constraint $|m| = 1$. In the presence of a deformation it is natural to assume that $|m \circ w| \det \nabla w = 1$, where the Jacobian takes care of the change of the volume of an elementary sample. This, however introduces considerable technical difficulties in terms of the regularity of the involved terms and when establishing compactness properties of the magneto-elastic states. In our setting, when the deformations are close to identity, we instead make the assumption   
\begin{equation}
\label{Heisenberg}
    |m \circ w| = 1 \quad \textnormal{a.e. in } \Omega,
\end{equation}
dropping the term $\det \nabla w$, which is reasonable from the modeling point of view and allows us to avoid unnecessary technical complications. 
 In the fully nonlinear setting, different formulations of the Heisenberg constraint have been proposed in the literature, cf. \cite{RT15} for a related discussion. 


The focus of our work is on a simultaneous homogenization and linearization analysis. To this end we introduce a scaling factor $\frac{1}{\e^{2\alpha}}$, for some $\alpha > 0$, in front of the magnetostrictive energy term in \eqref{resc-en} below. An application of the geometric rigidity result \cite{FJM02} then yields that  our admissible deformations are close to the identity. Therefore, for every deformation $w \in \Y$ and for any $\e > 0$ we introduce the rescaled displacement
\begin{equation*}
    u(x) := \frac{w(x) - x}{\ealpha},
\end{equation*}
The parameter $\alpha$ can be considered, roughly speaking, describe the ``relative speed''   of the linearization and homogenization processes. In the full 3d setting (at list in our case), however, in contrast  to the dimension reduction problems, e.g. plate theories, the analysis and the limit problem do not depend on $\alpha$.  

Our rescaled magnetoelastic energy $F_\e$ reads as  
\begin{equation}
\label{resc-en}
\begin{aligned}
    F_{\e}(u,m)
    := \ &\frac{1}{\e^{2\alpha}} \int_\Omega W \left(\frac{x}{\e}, \Id + \e^\alpha \nabla u(x), m(x + \e^\alpha u(x)) \right) \dx \\
    + &\int_\Omega a \left(\frac{x}{\e}\right) |\nabla m(x + \e^\alpha u(x))|^2 |\det(\Id+\e^\alpha\nabla u)| \dx + \frac{\mu_0}{2} \int_{\R^3} |\nabla \psi_m|^2 \dx,
\end{aligned}
\end{equation}
for all $(u,m)\in \mathcal{A}_{\e}$, where the class $\mathcal{A}_{\e}$ of admissible states is defined as 
\begin{equation}
\label{adm}
    \A_\e := \{ (u, m) \in W^{1,p}_g(\Omega; \R^3) \times H^1((id + \e^\alpha u)(\Omega); \mathbb{S}^2) \ : \ id + \e^\alpha u \in \Y \}.
\end{equation}
We stress that, for $\e$ small enough and $u \in W^{1,p}_g(\Omega;\R^3)$, the map $id + \e^\alpha u$ is a homeomorphism, so that the deformed set $(id + \e^\alpha u)(\Omega)$ in the expression above is open and coincides, up to a set of measure zero, with $\Omega^{(id + \e^\alpha u)(\Omega)}$. (Note the difference in the way the boundary condition  is formulated in $\A_\e$ compared to $\A$.)

The main result of this work is a characterization of the asymptotic behaviour of the functionals $F_\e$ as $\e \to 0$. The limiting functional $F_\hom$ is given by
\begin{equation}
\label{hommagnetoel}
\begin{aligned}
    F_{\hom}(u, m)
    := & \ \frac{1}{2} \int_\Omega Q_\hom(\nabla u(x), m(x)) \dx + \int_\Omega T_\hom(\nabla m(x)) \dx \\
    + & \ \frac{\mu_0}{2} \int_{\Omega}| \nabla\psi_m(x)|^2 \dx,
\end{aligned}
\end{equation}
for every $(u,m)\in \mathcal{A}_0$, where
\begin{equation*}
    \A_0 := W^{1,p}_g(\Omega; \R^3) \times H^1(\Omega; \s^2).
\end{equation*}
In \eqref{hommagnetoel}, the linearized and homogenized density $Q_\hom$ is a quadratic form given by 
\begin{equation}
\label{Qhom}
    Q_\hom(A, \nu) := \inf \left\{ \int_Y Q(y, A + \nabla \phi(y), \nu) \dif y \ : \ \phi \in H^1_\per(Y;\R^3) \right\},
\end{equation}
for any $A \in \R^{3 \times 3}$ and $\nu \in \s^2$, where $Q$ is  defined in \ref{H5}. In turn, $T_\hom$ is the homogenized exchange energy density define as
\begin{equation}
\label{Thom}
    T_\hom(A) := \inf \left\{ \int_Y a(y) \, |A + \nabla \phi(y)|^2 \dif y \ : \ \phi \in H^1_\per(Y; \R^3) \right\},
\end{equation}
for any $A \in \cup_{s\in \s^2}T_s \, \s^2$, where $T_s \, \s^2$ is the tangent space of $\s^2$ at the point $s \in \s^2$.

One may expect that the minimization in \eqref{Thom} is taken over the tangent bundle. This is indeed the case, however, due to the sphere being invariant under rotations, it is possible to reduce the problem  to an unconstrained minimization, see \cite[Section 3(b) and Remark 3.3]{AD15}.
 In the general case, where manifold-valued maps are taken under consideration, an explicit dependence on the point-wise tangent space must also be taken into account. 

We are now in a position to state our main result.

\begin{thm}
\label{main_thm}
    Let $\alpha>0$, and assume \ref{H1}-\ref{H5}. Then, the family of magnetoelastic energies $(F_\e)_\e$, given by \eqref{resc-en}, $\Gamma$-converges with respect to the convergence in Propositions \ref{compact} and \ref{prop:two_scale} to the homogenized functional $F_{\hom}$ defined by \eqref{hommagnetoel}.
    \qed
\end{thm}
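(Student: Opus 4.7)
The plan is to establish $\Gamma$-convergence by proving the two classical inequalities (liminf bound and existence of a recovery sequence) using the compactness in Propositions \ref{compact} and \ref{prop:two_scale}, treating separately the three contributions of $F_\e$.

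For the liminf inequality, let $(u_\e, m_\e) \in \A_\e$ converge to $(u, m) \in \A_0$ with $\sup_\e F_\e(u_\e, m_\e) < +\infty$. I would first apply geometric rigidity \cite{FJM02} to extract rotations close to the identity, which together with assumption \ref{H2} yields $\e^\alpha\nabla u_\e \to 0$ in measure; on the good set I would use the Taylor expansion \ref{H5} to write
\begin{equation*}
    \frac{1}{\e^{2\alpha}} W\left(\tfrac{x}{\e}, \Id + \e^\alpha\nabla u_\e, m_\e\circ(\mathrm{id}+\e^\alpha u_\e)\right) = \tfrac{1}{2} Q\left(\tfrac{x}{\e}, \nabla u_\e, m_\e\circ(\mathrm{id}+\e^\alpha u_\e)\right) + o(1),
\end{equation*}
so the remainder is negligible. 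Using the weak two-scale convergence $\nabla u_\e \wts \nabla u + \nabla_y \phi$ for some corrector $\phi \in L^2(\Omega; H^1_\per(Y;\R^3))$, combined with the strong two-scale convergence of the composed magnetization, the convexity of $Q$ in its second argument, and the definition \eqref{Qhom} of $Q_\hom$, yields the liminf bound for the magnetoelastic term. For the exchange term, I would change variables to the reference configuration, obtaining on $\Omega$ the integrand $a(x/\e)\,|(\nabla m_\e)\circ(\mathrm{id}+\e^\alpha u_\e)|^2 |\det(\Id+\e^\alpha\nabla u_\e)|$, and by the chain rule relate this to $\nabla\tilde m_\e$ where $\tilde m_\e(x) := m_\e(x+\e^\alpha u_\e(x))$. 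The two-scale limit has the form $\nabla m(x) + \nabla_y \psi(x,y)$ with $\psi(x,\cdot) \in H^1_\per(Y;\R^3)$ pointwise tangent to $\s^2$ at $m(x)$; minimizing in $\psi$ and using \eqref{Thom} gives the liminf. For the magnetostatic self-energy, weak compactness of $\nabla\psi_{m_\e}$ in $L^2(\R^3)$ combined with the lower semicontinuity of the $L^2$-norm delivers the bound.

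For the limsup inequality, given $(u, m) \in \A_0$, I would first approximate $(u, m)$ by smooth pairs to regularize the correctors, then set
\begin{equation*}
    u_\e(x) := u(x) + \e^\alpha \phi_\e(x, x/\e),
\end{equation*}
where $\phi(x,\cdot) \in H^1_\per(Y;\R^3)$ realizes the infimum in $Q_\hom(\nabla u(x), m(x))$, with $\phi_\e$ a standard two-scale regularization. The magnetization recovery sequence is obtained by prescribing on $\Omega$ the map
\begin{equation*}
    \tilde m_\e(x) := \pi_{\s^2}\bigl(m(x) + \e\, \psi_\e(x, x/\e)\bigr),
\end{equation*}
where $\pi_{\s^2}$ is the nearest-point projection and $\psi(x,\cdot) \in H^1_\per(Y;\R^3)$ is a minimizer in \eqref{Thom} chosen pointwise tangent to $m(x)$, then transporting it to the deformed configuration via $m_\e := \tilde m_\e \circ (\mathrm{id}+\e^\alpha u_\e)^{-1}$, noting that $\mathrm{id}+\e^\alpha u_\e$ is a homeomorphism for small $\e$. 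The admissibility (injectivity and positivity of the Jacobian) follows from the smoothness of $u$ and the smallness of $\e^\alpha$, while the constraint $|m_\e| = 1$ is ensured by the projection. The energies converge by the Taylor expansion \ref{H5} applied pointwise, the definition of $Q_\hom$ and $T_\hom$, and continuity of the stray field map.

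The main obstacle I anticipate is the construction of the recovery sequence for the magnetization: the tangent constraint to $\s^2$ must be preserved at leading order, so that the sphere projection introduces only higher-order errors; moreover the corrector $\psi$ must be assembled on the reference configuration while the energy naturally lives on the moving deformed set $(\mathrm{id}+\e^\alpha u_\e)(\Omega)$, so the interplay between change of variables, two-scale convergence, and the nonlocal stray-field contribution requires careful bookkeeping. A secondary difficulty, closely tied to this one, is ensuring that $\nabla\psi_{m_\e} \to \nabla\psi_m$ strongly in $L^2(\R^3)$ despite the $\e$-dependence of the magnetized region; this will rely on elliptic regularity for the Maxwell equation \eqref{maxwell} and the fact that the symmetric difference between $(\mathrm{id}+\e^\alpha u_\e)(\Omega)$ and $\Omega$ has vanishing measure.
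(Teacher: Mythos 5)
Your overall strategy is the same as the paper's: split $F_\e$ into the three contributions, prove the lower bound for the elastic part by truncation plus the Taylor expansion \ref{H5} and two-scale lower semicontinuity of $Q$, handle the exchange part through the two-scale limit of $\nabla m_\e\circ w_\e\,|\det\nabla w_\e|^{1/2}$ from Proposition \ref{prop:two_scale}, treat the stray field by stability of the Maxwell equation, and build the recovery sequence from oscillating correctors with a nearest-point projection onto $\s^2$ transported to the deformed configuration. However, there is a concrete error in your recovery sequence for the displacement. You set $u_\e:=u+\e^\alpha\phi_\e(x,x/\e)$, so the oscillatory part of $\nabla u_\e$ is of order $\e^{\alpha-1}\nabla_y\phi$. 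For $\alpha>1$ this vanishes, the corrector never enters the limit, and your construction only produces the value $\tfrac12\int_\Omega\int_Y Q(y,\nabla u,m)\,\dif y\dx$, which is strictly larger than $\tfrac12\int_\Omega Q_\hom(\nabla u,m)\dx$ unless the cell minimizer is trivial; for $\alpha<1$ the gradient blows up, $\nabla u_\e$ is not bounded in $L^2$, and neither the quadratic expansion step nor the convergences of Proposition \ref{compact} apply. The correct prefactor is the periodicity scale $\e$, as in \eqref{def:recoveryseq}: with $u_\e:=u+\e\,\psi(x/\e)$ one gets $\nabla u_\e=\nabla u+\nabla_y\psi(x/\e)$, so that $\Id+\e^\alpha\nabla u_\e$ carries the oscillation at order $\e^\alpha$, which is what \ref{H5} and the cell formula \eqref{Qhom} require. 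The fix is immediate, but as written your limsup construction fails for every $\alpha\neq 1$, whereas the theorem is stated for all $\alpha>0$.

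Two smaller points. First, you assume the minimizer in \eqref{Thom} can be chosen pointwise tangent to $\s^2$ at $m(x)$. For the liminf this is irrelevant, since \eqref{Thom} is an unconstrained infimum and bounds the limit from below anyway; for the limsup, the identification of the tangentially constrained problem with the unconstrained one is exactly the rotation-invariance observation of \cite{AD15} recalled after \eqref{Thom}, and in the paper it materializes through the factor $\nabla\pi[m]$ (the differential of the projection) in the two-scale limit of $\nabla m_\e(w_\e)(\det\nabla w_\e)^{1/2}$; your claim that the projection only introduces higher-order errors needs this step and is not automatic. Second, for the stray field you only need the linear stability estimate of Lemma \ref{lmm:sol_magneto_self} together with the strong $L^2$ convergence of $\chi_{w_\e(\Omega)}m_\e$ from Proposition \ref{compact}, which gives strong convergence of $\nabla\psi_{m_\e}$ and hence the full limit of the magnetostatic term (needed for the upper bound, where weak lower semicontinuity alone does not suffice); no elliptic regularity argument is required.
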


The proof of Theorem \ref{main_thm} hinges upon several intermediate results. The key hurdles are understanding the compactness properties of a sequence of states $(u_\e,m_\e)\in \mathcal{A}_\e$ with equibounded energies, and a characterization of the two-scale limit of the magnetization. They are the subjects of Propositions \ref{compact} and \ref{prop:two_scale}, respectively. A crucial difficulty for both results is the fact that magnetization is an {\it actual} quantity and, thus, is defined and equibounded only on the deformed set. This requires a careful simultaneous analysis of the displacement and the magnetization, as well as a stability result for the inverse of the determinant of the admissible deformations. This is the main motivation for the choice of $h$ in assumption \ref{H2}, for it implies that for a bounded energy sequence not only satisfies
\begin{equation}
\label{5.15a}
    |\det\nabla w|^{-1} \in L^s(\Omega) \quad \textnormal{ for } \ s > \frac{p}{p-2}
\end{equation}
for any $w$ in the sequence, but also implies that $|\det\nabla w_\e|^{-1}$ is uniformly bounded in $L^s(\Omega)$.
\\
Our second contribution is to show in Section \ref{sec:commute} that the functional $F_{\hom}$ in \eqref{hommagnetoel} can be recovered by first linearizing magnetostrictive contribution and then passing to the  homogenization limit. This is illustrated in the following diagram, where we denote by $G_{\e}^{\textrm{lin}}$ the corresponding linearized functional.

\begin{figure}[H]
\begin{center}
\begin{tikzpicture}
    \tkzDefPoint(0,6){A}   
    \tkzDefPoint(6,6){B}   
    \tkzDefPoint(0,0){D}   
    \tkzDefPoint(6,0){C}   
    \tkzDefPoint(6,0.1){K}
    \tkzDefPoint(5.9,0.1){Z}
    
    \fill (0,6) circle (2pt) node[above left] {$G_{\e}$};
    \fill (6,6) circle (2pt) node[above right] {$G_{\e}^{\textrm{lin}}$};
    \fill (6,0) circle (2pt) node[below right] {$F_{\textrm{hom}}$};
    \fill (0,0) circle (2pt) node[below right] {$??$};
    
    \draw[->, thick] (A) -- (B) node[midway, above] {Linearization};
    \draw[->, thick] (B) -- (C)
    node[midway, above, sloped] {Homogenization};
    \draw[->, thick] (A) -- (C)
    node[midway, above, sloped] {Homogenization \& Linearization};
    \draw[->, dashed, thick] (A) -- (D) node[midway, above,sloped] {Homogenization};
    \draw[->, dashed, thick] (D) -- (C) node[midway, below] {Linearization};
\end{tikzpicture}
\end{center}
\end{figure}

The study of the missing dashed lines, on the other hand, remains instead an open problem, for, even in the setting of nonlinear elasticity (without magnetic contribution), homogenization under invertibility constraints is currently not fully understood.

The paper is organized as follows. In Section \ref{sec:auxiliary} we collect necessary preliminary results. The precompactness of sequences with equibounded energy is the focus of Section \ref{sec-compact}. A separate two-scale analysis of the asymptotic behavior of the magnetizations is carried out in Section \ref{sec_2sc}. The proof of Theorem \ref{main_thm} is presented in Section \ref{gamma-conv}. Finally, the sequential linearization and homogenization is discussed in Section \ref{sec:commute}.




\section{Preliminary results}
\label{sec:auxiliary}

To establish compactness, we rely upon the seminal geometric rigidity result in \cite{FJM02, FJM02-p}. We refer to \cite[Section 2.4]{CS06} for the case $p \neq 2$, as well as to \cite[Remark 2.6]{B21} for the second part of the statement.

\begin{thm}
\label{thm_rigest}
    Let $p \in (1, +\infty)$. Let $\Omega \subset \R^3$ be a bounded Lipschitz domain. Then, there exists a constant $C = C(\Omega, p) > 0$ such that for every $v \in W^{1,p}(\Omega; \R^3)$ there exists a constant rotation $R \in SO(3)$ satisfying
    \begin{equation*}
        \int_\Omega |\nabla v - R|^p \, \dx \le C \int_\Omega \dist^p(\nabla v, SO(3)) \, \dx.
    \end{equation*}
    Further, the constant rotation $R \in SO(3)$ is independent of the exponent $p$. In other words,
     \begin{equation*}
        \int_\Omega |\nabla v - R|^r \, \dx \le C \int_\Omega \dist^r(\nabla v, SO(3)) \, \dx.
    \end{equation*}
    for every $1 < r\le p$.
    \qed
\end{thm}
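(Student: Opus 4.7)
The plan is to follow the strategy of Friesecke, James, and Müller for the $p=2$ case, extended to general $p$ via the Lipschitz-truncation argument of Conti--Schweizer. A short additional step then yields the $p$-independence of the rotation. The first move is to reduce to the case of a cube: any bounded Lipschitz domain can be covered by finitely many axis-aligned cubes $\{Q_i\}$, and a chaining argument across overlaps---using the rigidity estimate on each $Q_i$ to compare the candidate rotations $R_i$ and $R_{i+1}$ on $Q_i \cap Q_{i+1}$---pieces the local estimates into a single global estimate, at the cost of a constant depending on the geometry of $\Omega$.

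On a single cube $Q$ and for $p=2$, the heart of the argument is a nonlinear Korn-type inequality. Setting $\e^2 := \frac{1}{|Q|}\int_Q \dist^2(\nabla v, SO(3))\dx$, when $\e$ is small $\nabla v$ lies close to the smooth manifold $SO(3)$ on most of $Q$. One constructs a Lipschitz/harmonic approximation $\tilde v$ of $v$ whose gradient is close in $L^\infty$ to a single rotation $R$; the choice of $R$ here must absorb the antisymmetric ambiguity inherent in Korn's inequality. For $F$ near $R \in SO(3)$ the Taylor expansion of the squared distance gives $\dist(F, SO(3)) \simeq |\textnormal{sym}(F R^T - \Id)|$ up to higher-order terms, so the standard linear Korn inequality upgrades control of the symmetric part of $\nabla v\, R^T - \Id$ to control of the full deviation $\nabla v - R$. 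A scaling and compactness argument then produces a universal constant $C$ depending only on $Q$.

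To pass to general $p \in (1,\infty)$, one fixes $R$ as the rotation produced by the $p=2$ step and decomposes the integrand at a level $\lambda$: on $\{|\nabla v - R| \le \lambda\}$ a Lipschitz truncation of $v$ combined with the $L^2$ estimate and maximal-function bounds of Calder\'on--Zygmund type controls the $L^p$ norm via interpolation, while on $\{|\nabla v - R| > \lambda\}$, once $\lambda$ exceeds twice the diameter of $SO(3)$, one has $|\nabla v - R|$ pointwise comparable to $\dist(\nabla v, SO(3))$, so the bad set is controlled directly. Balancing $\lambda$ closes the argument. The $p$-independence of $R$ is now essentially free: the construction uses a single rotation---the one selected in the $L^2$ step---uniformly across exponents, so whenever $v \in W^{1,p}$ the same $R$ verifies the $L^r$ inequality for every $1 < r \le p$.

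The main obstacle is the Lipschitz truncation for $p \neq 2$: one needs a truncated map $v_\lambda$ that (a) has an $L^\infty$ bound on its gradient, (b) differs from $v$ only on a set whose measure is tightly controlled by the superlevel sets of the maximal function of $\dist(\nabla v, SO(3))$, and (c) preserves enough structure to feed back into the $L^2$ rigidity just established. The combinatorial and measure-theoretic bookkeeping required for this construction is what makes the $L^p$ extension substantially harder than the original $L^2$ case, and it is also what guarantees that a single rotation $R$ suffices for every admissible exponent simultaneously---a feature that is exploited throughout the magnetoelastic compactness and two-scale analysis in the rest of the paper.
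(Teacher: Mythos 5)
The paper does not actually prove Theorem \ref{thm_rigest}: it is imported from the literature (the $L^2$ case from \cite{FJM02,FJM02-p}, the case $p\neq 2$ from \cite[Section 2.4]{CS06}, and the exponent-independence of the rotation from \cite[Remark 2.6]{B21}), so your outline can only be measured against those references, whose strategy (reduction to cubes and chaining, $L^2$ rigidity via truncation, linearization around $SO(3)$ plus Korn, then Lipschitz truncation for general $p$) you reproduce at the level of a roadmap. The genuine gap sits exactly where you place ``the main obstacle'': the $p\neq 2$ step is described by its desiderata but not carried out, and the one concrete mechanism you do offer for the good set cannot work as stated. On $\{|\nabla v-R|\le\lambda\}$ interpolation gives, for $p>2$, a bound of the form $\lambda^{p-2}\int_\Omega|\nabla v-R|^2\dx\le C\lambda^{p-2}\int_\Omega \dist^2(\nabla v,SO(3))\dx$, while your bad-set argument forces $\lambda$ to stay above a fixed constant (twice the diameter of $SO(3)$). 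The target inequality is homogeneous of degree $p$ in $\dist(\nabla v,SO(3))$, whereas $\lambda^{p-2}\int_\Omega\dist^2\dx$ with $\lambda$ bounded below is homogeneous of degree $2$; taking $\dist(\nabla v,SO(3))$ uniformly equal to a small $\delta$ shows that no balancing of a single level $\lambda$ can produce $C\int_\Omega\dist^p\dx$. The actual argument of \cite{CS06} runs the truncation over all levels $\lambda$, uses the weak-type $(1,1)$ estimate for the maximal function, and integrates in $\lambda$; that Calder\'on--Zygmund bookkeeping is precisely the content that is missing.

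Two further points. First, the statement allows $p\in(1,+\infty)$, and for $p<2$ the rotation cannot be ``the one produced by the $p=2$ step'' applied to $v$ itself, since $\nabla v$ need not belong to $L^2$; the single rotation must come from the $L^2$ estimate applied to the Lipschitz-truncated map, and one must then verify that this same rotation survives in the $L^r$ estimate for every $1<r\le p$ --- this is what \cite[Remark 2.6]{B21} records, and it is exactly what Step 1 of Proposition \ref{compact} uses when the same $R_\e$ serves the exponents $2$ and $p$ simultaneously. Second, be aware that no a posteriori fix is available: comparing the rotations $R_r$ and $R_p$ by averaging only yields $|R_r-R_p|\le C\bigl(\tfrac{1}{|\Omega|}\int_\Omega\dist^p(\nabla v,SO(3))\dx\bigr)^{1/p}$, which by Jensen's inequality is weaker than the bound $\bigl(\tfrac{1}{|\Omega|}\int_\Omega\dist^r(\nabla v,SO(3))\dx\bigr)^{1/r}$ one would need, so the exponent-independence really must be extracted from the construction itself, not deduced afterwards.
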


The following lemma is also instrumental for our compactness analysis.

\begin{lemma}
\label{l5.5}
    Let $\Omega \subset \R^3$ be a bounded domain and let $(f_\e)_\e\subset L^1(\Omega)$ be such that $f_\e \to 0$ strongly in $L^1(\Omega)$. If $(f_\e)_\e$ is bounded in $L^{r}(\Omega)$ for some $r>1$, then $f_\e \to 0$ strongly in $L^{r^*}(\Omega)$ for any $r^*\in [1,r)$.
\end{lemma}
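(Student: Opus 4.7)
The plan is to deduce the result by a direct interpolation argument between the $L^1$ and $L^r$ norms.

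First, I would fix $r^* \in [1,r)$ and observe that $r^*$ lies in the interpolation range between $1$ and $r$. Writing $\tfrac{1}{r^*} = \theta + \tfrac{1-\theta}{r}$ and solving for $\theta$ gives
\begin{equation*}
    \theta = \frac{r - r^*}{r^*\,(r - 1)} \in (0, 1],
\end{equation*}
which is well-defined precisely because $r > 1$ and $r^* < r$. (The endpoint $r^* = 1$ corresponds to $\theta = 1$ and is already contained in the hypothesis.)

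Next, I would apply the standard interpolation inequality (an immediate consequence of H\"older's inequality applied to the splitting $|f_\e|^{r^*} = |f_\e|^{\theta r^*}\cdot|f_\e|^{(1-\theta) r^*}$ with conjugate exponents $1/(\theta r^*)$ and $r/((1-\theta) r^*)$):
\begin{equation*}
    \|f_\e\|_{L^{r^*}(\Omega)} \le \|f_\e\|_{L^1(\Omega)}^{\theta}\,\|f_\e\|_{L^r(\Omega)}^{1-\theta}.
\end{equation*}

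Finally, using the hypotheses $\|f_\e\|_{L^1(\Omega)} \to 0$ and $\sup_\e \|f_\e\|_{L^r(\Omega)} < +\infty$, the right-hand side is bounded above by $C\,\|f_\e\|_{L^1(\Omega)}^{\theta}$, which tends to $0$ as $\e \to 0$ since $\theta > 0$. This yields the desired convergence $f_\e \to 0$ strongly in $L^{r^*}(\Omega)$.

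There is no real obstacle here; the only mild subtlety is verifying that the interpolation exponent $\theta$ is admissible in the full range $r^* \in [1,r)$, and the boundedness of $\Omega$ is not needed (it would only be needed if one wanted to extend the statement to $r^* = r$ via a different argument, which however fails in general).
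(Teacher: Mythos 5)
Your argument is correct and coincides with the paper's own proof: both fix $\theta$ via $\tfrac{1}{r^*}=\theta+\tfrac{1-\theta}{r}$, split $|f_\e|^{r^*}=|f_\e|^{\theta r^*}|f_\e|^{(1-\theta)r^*}$, and apply H\"older's inequality to get the interpolation bound $\|f_\e\|_{L^{r^*}}\le \|f_\e\|_{L^1}^{\theta}\|f_\e\|_{L^r}^{1-\theta}$, after which the hypotheses give the conclusion. Your explicit formula for $\theta$ and the remark that boundedness of $\Omega$ is not needed are fine additions but do not change the substance.
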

\begin{proof}
    Note that $(f_\e)_\e$ is bounded in $L^{r^\ast}$ for every $r^{\ast} \in [1, r)$. Let us now fix $r^{\ast} \in [1, r)$ and let $\theta\in (0,1)$ be such that $\tfrac{1}{r^\ast} = \tfrac{1-\theta}{r} + \theta$. Since $(|f_\e|^{r^\ast(1-\theta)})_\e$ is bounded in $L^{\frac{r}{r^\ast(1-\theta)}}(\Omega)$ and $(|f_\e|^{r^\ast\theta})$ converges to zero strongly in $L^{\frac{1}{r^\ast\theta}}(\Omega)$, the claim follows directly from H\"older's inequality.      
\end{proof}

In order to identify a lower bound for the asymptotic behavior of the magnetoelastic energy contributions, we work with the following variant of \cite[Lemma 2.3.2]{N10}. For the basic properties of two-scale convergence, we refer the interested reader to \cite{Ng89, A92}.

\begin{lemma}
\label{lmm-neuk}
    Let $\Omega \subset \R^3$ be a bounded domain and let $\alpha > 0$. Let $(v_\e)_\e$ be a bounded sequence in $L^2(\Omega; \R^3)$ such that $v_\e \wts v$ weakly  in $L^2(\Omega \times Y; \R^3)$. If for every $\e > 0$ we define
    \begin{equation*}
        E_\e := \left\{ x \in \Omega \ : \ |v_\e(x)| \le \e^{-\frac{\alpha}{2}} \right\},
    \end{equation*}
    then
    \begin{equation*}
        \chi_{E_\e} v_\e \wts v \quad \textnormal{ weakly  in } L^2(\Omega \times Y; \R^3).
    \end{equation*}
\end{lemma}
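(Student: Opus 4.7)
The plan is straightforward: split $v_\e = \chi_{E_\e} v_\e + \chi_{\Omega \setminus E_\e} v_\e$ and, using the already established two-scale convergence $v_\e \wts v$, show that the complementary piece $\chi_{\Omega \setminus E_\e} v_\e$ converges weakly two-scale to zero. The intuition is that the bad set $\Omega \setminus E_\e$, where $|v_\e|$ exceeds the threshold $\e^{-\alpha/2}$, must shrink to a set of vanishing measure; and although on this set the values of $v_\e$ can be large, the uniform $L^2$-bound on $v_\e$ gives us enough control via Cauchy--Schwarz to kill the contribution against smooth test functions.

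First, I would apply Chebyshev's inequality: since $(v_\e)_\e$ is bounded in $L^2(\Omega;\R^3)$, say by a constant $M$,
\begin{equation*}
    |\Omega \setminus E_\e| = \left|\left\{x \in \Omega : |v_\e(x)|^2 > \e^{-\alpha}\right\}\right| \le \e^{\alpha} \int_{\Omega} |v_\e(x)|^2 \dx \le M^2 \e^{\alpha} \longrightarrow 0.
\end{equation*}
Next, for an arbitrary admissible test function $\varphi \in C_c^\infty(\Omega; C^\infty_\per(Y; \R^3))$, which in particular is uniformly bounded, I would estimate by Cauchy--Schwarz
\begin{equation*}
    \left|\int_\Omega \chi_{\Omega\setminus E_\e}(x)\, v_\e(x) \cdot \varphi\!\left(x, \tfrac{x}{\e}\right) \dx\right| \le \|v_\e\|_{L^2(\Omega)} \, \|\varphi\|_{L^\infty(\Omega\times Y)} \, |\Omega \setminus E_\e|^{1/2} \longrightarrow 0.
\end{equation*}

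Since $(\chi_{\Omega\setminus E_\e}\, v_\e)_\e$ is obviously bounded in $L^2(\Omega;\R^3)$, a standard density argument (the smooth oscillating test functions are dense in the space of admissible two-scale test functions, e.g.\ in $L^2(\Omega; C_\per(Y;\R^3))$) upgrades the above convergence to all admissible test functions, yielding $\chi_{\Omega\setminus E_\e}\, v_\e \wts 0$ weakly in $L^2(\Omega \times Y;\R^3)$. Combining this with the hypothesis $v_\e \wts v$ and writing $\chi_{E_\e} v_\e = v_\e - \chi_{\Omega\setminus E_\e} v_\e$ gives the desired conclusion.

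There is no real obstacle: the whole argument is a one-line Chebyshev estimate followed by a one-line Cauchy--Schwarz inequality. The only point requiring some (entirely standard) care is the density argument used to pass from smooth test functions $\varphi$, for which boundedness in $L^\infty$ makes the Cauchy--Schwarz bound trivial, to the full class of admissible two-scale test functions.
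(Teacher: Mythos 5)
Your proof is correct. The estimate $|\Omega\setminus E_\e|\le \e^{\alpha}\|v_\e\|_{L^2(\Omega;\R^3)}^2\le C\e^{\alpha}$ is exactly the paper's opening step (there phrased as $\int_\Omega|v_\e|^2\dx\ge\e^{-\alpha}\int_\Omega(1-\chi_{E_\e})\dx$), so both arguments rest on the same observation that the bad set has vanishing measure. Where you diverge is in how this is converted into the two-scale statement: the paper deduces that $\chi_{E_\e}\to 1$ strongly in every $L^r(\Omega)$, hence in measure, and then invokes the abstract product result \cite[Proposition 2.1.13]{N10} (a boundedly-in-measure convergent factor times a weakly two-scale convergent factor two-scale converges to the product of the limits), whereas you prove the relevant special case by hand: you split $v_\e=\chi_{E_\e}v_\e+\chi_{\Omega\setminus E_\e}v_\e$, kill the complementary term against smooth oscillating test functions via Cauchy--Schwarz and the measure bound, and then use the uniform $L^2$-bound on $\chi_{\Omega\setminus E_\e}v_\e$ together with the standard density of smooth admissible test functions to upgrade to the full test class. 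Your route is more elementary and self-contained (no external product lemma needed), at the cost of having to carry out the routine density step explicitly; the paper's route is shorter on the page but outsources the work to Neukamm's proposition. The one point you rightly flag --- that testing against a dense class suffices only because the sequence is bounded in $L^2$ --- is indeed the only place requiring care, and your sequence $\chi_{\Omega\setminus E_\e}v_\e$ clearly satisfies that bound, so the argument closes.
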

\begin{proof}
    By definition we have
    \begin{equation*}
        \int_\Omega |v_\e(x)|^2 \dx \ge \e^{-\alpha} \int_\Omega (1 - \chi_{E_\e}(x)) \dx.
    \end{equation*}
    Since $|1 - \chi_{E_\e}|^r = 1 - \chi_{E_\e}$ for all $r \in [1,+\infty)$, we deduce that $1 - \chi_{E_\e}$ strongly converges to $0$ in $L^r(\Omega)$ for all $r \in [1,+\infty)$. Consequently, $\chi_{E_\e}$ strongly converges to $1$ in $L^r(\Omega)$ and, in particular, in measure. By applying \cite[Proposition 2.1.13]{N10} to the product $\chi_{E_\e} v_\e$, we infer the thesis.
\end{proof}

We further rely on the following result, whose proof can be found in \cite[Proposition 8.8]{BHMC17}.

\begin{lemma}
\label{lmm:sol_magneto_self}
    For every $v \in L^2(\R^3; \R^3)$, there exists a unique $\psi_v\in L^{1,2}(\R^3)$ (up to additive constants), such that
    \begin{equation*}
        \int_{\R^3} v \cdot \nabla \varphi \dx = \int_{\R^3} \nabla\psi_v \cdot \nabla \varphi \dx, \quad \forall \varphi\in L^{1,2}(\R^3),
    \end{equation*}
    and 
    \begin{equation*}
        \|\psi_v\|_{L^{1,2}(\R^3)} = \|\nabla \psi_v\|_{L^2(\R^3 \times \R^3)} \leq \|v\|_{L^2(\R^3)}.
    \end{equation*}
    \qed
\end{lemma}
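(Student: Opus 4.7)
The plan is to apply the Riesz representation theorem in the Beppo-Levi Hilbert space. As recalled right before the statement, $L^{1,2}(\R^3)$ modulo constants is a Hilbert space under the inner product $(u,\varphi) := \int_{\R^3} \nabla u \cdot \nabla \varphi \dx$, by \cite[Corollary 1.1]{DL54}. This provides the entire functional-analytic framework needed for the argument.

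First I would define the linear functional $T_v : L^{1,2}(\R^3) \to \R$ by $T_v(\varphi) := \int_{\R^3} v \cdot \nabla \varphi \dx$. Since $v \in L^2(\R^3;\R^3)$ and $\nabla \varphi \in L^2(\R^3;\R^3)$, the Cauchy--Schwarz inequality gives
\[
    |T_v(\varphi)| \leq \|v\|_{L^2(\R^3)} \, \|\nabla \varphi\|_{L^2(\R^3)} = \|v\|_{L^2(\R^3)} \, \|\varphi\|_{L^{1,2}(\R^3)},
\]
so $T_v$ is continuous with operator norm at most $\|v\|_{L^2(\R^3)}$. Since the value depends only on $\nabla \varphi$, it descends to a continuous linear functional on the quotient $L^{1,2}(\R^3)/\R$. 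The Riesz representation theorem then yields a unique $\psi_v$ (up to additive constants) such that $(\psi_v,\varphi) = T_v(\varphi)$ for every $\varphi \in L^{1,2}(\R^3)$, which is exactly the weak formulation in the statement.

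The norm bound either follows directly from Riesz representation, since the norm of the representing element equals the operator norm of $T_v$, or by testing the weak formulation with $\varphi = \psi_v$ and applying Cauchy--Schwarz, which gives $\|\nabla \psi_v\|_{L^2(\R^3)}^2 \le \|v\|_{L^2(\R^3)}\,\|\nabla \psi_v\|_{L^2(\R^3)}$ and hence the claim after division. I do not anticipate any substantial obstacle: the only non-trivial input is the Hilbert space structure of the Beppo-Levi space, already supplied by the cited classical result, and the rest is a textbook application of Riesz representation.
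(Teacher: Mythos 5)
Your proof is correct, and it is essentially the argument behind the result: the paper itself gives no proof but cites \cite[Proposition 8.8]{BHMC17}, which rests on exactly this Riesz representation (equivalently Lax--Milgram) argument in the Beppo-Levi space $L^{1,2}(\R^3)$ modulo constants, using the Deny--Lions completeness result already quoted in the paper. The continuity estimate via Cauchy--Schwarz and the resulting bound $\|\nabla\psi_v\|_{L^2}\le\|v\|_{L^2}$ are handled as in the standard proof, so there is nothing to add.
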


Now we need to recall sufficient conditions for a deformation to be injective if the gradient of the displacement is small enough in the operator norm, namely $|A|_O = \sup_{\xi \in \R^d} \tfrac{|A\xi|}{|\xi|}$. The proof can be found in \cite[Theorem 5.5-1]{ciarlet}.

\begin{thm}
\label{thm:injective}
    Let $\Omega \subset \R^d$ an open set and let $w := id + u : \Omega \to \R^d$ be differentiable in $x \in \Omega$. If $|\nabla u(x)|_O < 1$, then $\det \nabla w(x) > 0$. Moreover, let $\Omega \subset \R^d$ be a bounded domain with Lipschitz boundary. Then there exists a constant $c(\Omega)$ such that any function $w := id + u \in C^1(\overline{\Omega};\R^d)$ with $\sup_{x \in \overline{\Omega}} |\nabla u(x)|_O < c(\Omega)$ is injective.
    \qed
\end{thm}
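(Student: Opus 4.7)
The plan is to treat the two assertions separately. For the pointwise claim, fix $x \in \Omega$ with $|\nabla u(x)|_O < 1$ and consider the continuous map $\varphi : [0,1] \to \R$ given by $\varphi(t) := \det(\Id + t \nabla u(x))$. For every $t \in [0,1]$, the estimate $|t \nabla u(x)|_O \le |\nabla u(x)|_O < 1$ ensures that the Neumann series $\sum_{k \ge 0} (-t\nabla u(x))^k$ converges in operator norm and yields an explicit inverse of $\Id + t\nabla u(x)$; hence $\varphi(t) \ne 0$ throughout $[0,1]$. Since $\varphi(0) = 1 > 0$ and $\varphi$ is continuous, the intermediate value theorem forces $\varphi(1) = \det \nabla w(x) > 0$.

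For the global injectivity statement, the key geometric ingredient is the quasi-convexity of bounded Lipschitz domains: there exists a constant $\lambda = \lambda(\Omega) \ge 1$ such that every pair of points $x, y \in \overline{\Omega}$ can be joined by a piecewise $C^1$ curve $\gamma_{x,y} \subset \overline{\Omega}$ of length at most $\lambda\, |x-y|$. I would set $c(\Omega) := 1/\lambda(\Omega)$. Suppose, for contradiction, that $w = \Id + u \in C^1(\overline{\Omega};\R^d)$ satisfies $M := \sup_{\overline{\Omega}} |\nabla u|_O < c(\Omega)$ while $w(x) = w(y)$ for some $x \ne y$ in $\overline{\Omega}$. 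Then $u(x) - u(y) = y - x$, so $|u(x) - u(y)| = |x-y|$. On the other hand, integrating $\nabla u$ along $\gamma_{x,y}$ gives
\begin{equation*}
    |u(x) - u(y)| \le \int_{\gamma_{x,y}} |\nabla u|_O \, \mathrm{d} \mathcal{H}^1 \le M \, \lambda(\Omega) \, |x-y| < |x-y|,
\end{equation*}
a contradiction. Hence $w$ must be injective on $\overline{\Omega}$.

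The only non-routine step, and where the main difficulty lies, is the quasi-convexity of bounded Lipschitz domains. It is obtained by covering $\overline{\Omega}$ with finitely many coordinate charts in which $\partial \Omega$ is locally the graph of a Lipschitz function; within each such chart, two nearby points can be joined by a short broken-line path lying in $\overline{\Omega}$, while pairs of interior points are trivially connected by their Euclidean segment. A compactness and gluing argument then produces the global constant $\lambda(\Omega)$. This is a classical property of Lipschitz domains and may be invoked rather than reproved here; alternatively, one can follow verbatim the proof of Theorem 5.5-1 in \cite{ciarlet}, which quantifies the distortion of the local chart maps and concludes via an inverse-function-type argument.
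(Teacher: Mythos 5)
Your argument is correct and coincides with the proof the paper relies on: the paper itself gives no proof but cites \cite[Theorem 5.5-1]{ciarlet}, whose argument is exactly yours — invertibility of $\Id + t\nabla u(x)$ for $t\in[0,1]$ plus continuity of the determinant for the sign condition, and the quasiconvexity (geodesic) property of bounded Lipschitz domains combined with the mean-value estimate along a connecting path for injectivity.
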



\section{Compactness}
\label{sec-compact}

In this section, we adapt the approach in \cite{AKM24} to characterize compactness properties for sequences of displacements and magnetizations with equibounded energies. Our main result is the following.

\begin{prop}
\label{compact}
    Let $\alpha > 0$, and assume \ref{H1}--\ref{H5}. Let $(u_\e, m_\e) \in \A_\e$ be such that 
    \begin{equation}
    \label{suplim}
        \sup_{\e > 0} F_\e(u_\e, m_\e) < + \infty,
    \end{equation}
    where the energy $F_\e$ is defined as in \eqref{resc-en}. Then, there exists $(u,m) \in \A_0$ such that, up to subsequences, setting $w_\e := id + \e^\alpha u_\e$, there holds
    \begin{alignat*}{3}
        w_\e &\to id &&\qquad \textnormal{strongly in } W^{1,p}(\Omega; \R^3), \\
        u_\e &\rightharpoonup u &&\qquad \textnormal{weakly in } W^{1,2}(\Omega; \R^3), \\
        \det\nabla w_\e &\to 1 &&\qquad \textnormal{strongly in } L^\frac{p}{3}(\Omega), \\ 
        |\det\nabla w_\e|^{-1} &\to 1 &&\qquad \textnormal{strongly in } L^1(\Omega), \\     
        m_\e \circ w_\e &\rightharpoonup m  &&\qquad \textnormal{weakly in } W^{1,q}(\Omega; \R^3), \\
        m_\e \circ w_\e &\to m  &&\qquad \textnormal{strongly in } L^r(\Omega; \R^3) \quad \textnormal{for any } r \in [1,+\infty), \\
        \chi_{w_\e(\Omega)} m_\e &\to \chi_\Omega m &&\qquad \textnormal{strongly in } L^r(\R^3; \R^3) \quad \textnormal{for any } r \in [1,+\infty), \\
        \chi_{w_\e(\Omega)} \nabla m_\e &\rightharpoonup \chi_\Omega \nabla m &&\qquad \textnormal{weakly in } L^2(\R^3; \R^{3 \times 3}),
    \end{alignat*}
    where $q := \frac{2ps}{p+s(p+2)} \in (1, \tfrac{2p}{p+2})$, and $s$ is the exponent in \eqref{5.15a}.
\end{prop}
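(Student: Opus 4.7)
The plan is to proceed in four stages: elastic compactness via geometric rigidity, control of the Jacobians and their inverses, Lagrangian compactness of the magnetizations after composition with $w_\e$, and Eulerian identification of the two magnetization limits on $\R^3$.

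First, inserting the energy bound \eqref{suplim} into the coercivity \ref{H2} yields
\[
\int_\Omega \bigl[\dist^2(\nabla w_\e, SO(3)) + \dist^p(\nabla w_\e, SO(3))\bigr]\dx \leq C\e^{2\alpha}.
\]
Theorem \ref{thm_rigest}, with the same rotation $R_\e \in SO(3)$ valid for both exponents, then gives $\|\nabla w_\e - R_\e\|_{L^2(\Omega)} \leq C\e^\alpha$ together with the $L^p$-analogue. A Poincar\'e--Friedrichs argument at the Dirichlet boundary $\gamma$, comparing $w_\e$ to the affine map $R_\e x + c_\e$ and using $w_\e|_\gamma = (id + \e^\alpha g)|_\gamma$, forces $|R_\e - \Id| + |c_\e| = O(\e^\alpha)$. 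Hence $\nabla u_\e = \e^{-\alpha}(\nabla w_\e - \Id)$ is $L^2$-bounded, so that $u_\e \rightharpoonup u$ weakly in $W^{1,2}$, while the $L^p$-rigidity combined with the smallness of $R_\e - \Id$ yields $w_\e \to id$ strongly in $W^{1,p}$.

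Second, since $\det$ is a polynomial of degree three in the entries of $\nabla w_\e$, strong $L^p$-convergence $\nabla w_\e \to \Id$ immediately produces $\det \nabla w_\e \to 1$ in $L^{p/3}$, and a.e.\ up to subsequence. For the inverse Jacobian, the pointwise bound $W \geq h(\det F)$ in \ref{H2} together with the energy estimate controls $\int_\Omega h(\det \nabla w_\e)\dx$ uniformly, and the growth condition \eqref{grwt-h} triggers the de la Vall\'ee Poussin criterion, forcing equi-integrability of $(|\det \nabla w_\e|^{-1})_\e$. Vitali's theorem combined with the a.e.\ convergence then upgrades this to $|\det \nabla w_\e|^{-1} \to 1$ strongly in $L^1$, and the explicit choice $h(t) \sim t^{-s}$ near the origin provides the uniform $L^s$-bound in \eqref{5.15a}.

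Third, coercivity $a \geq C_1$ and the change of variables $z = w_\e(x)$ give $\int_{w_\e(\Omega)} |\nabla m_\e|^2\dx \leq C$. Writing $\nabla(m_\e \circ w_\e) = (\nabla m_\e \circ w_\e)\,\nabla w_\e$ and inserting a factor of $|\det \nabla w_\e|^{1/2}$, a double application of H\"older's inequality---using the uniform $L^s$-bound for $|\det \nabla w_\e|^{-1}$ from Step 2 and the uniform $L^p$-bound for $\nabla w_\e$ from Step 1---yields $\|\nabla(m_\e \circ w_\e)\|_{L^q(\Omega)} \leq C$ exactly for $q = 2ps/(p + s(p+2))$, which lies in $(1, 2p/(p+2))$ thanks to $s > p/(p-2)$. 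Combined with the Heisenberg constraint $|m_\e \circ w_\e| = 1$ a.e., this gives weak $W^{1,q}$-compactness; Rellich--Kondrachov and the uniform $L^\infty$-bound extend to strong $L^r$-convergence for any finite $r$.

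The main technical obstacle is the final Eulerian identification, since $\chi_{w_\e(\Omega)} \nabla m_\e$ is only $L^2$-bounded while the natural Lagrangian object $m_\e \circ w_\e$ is $W^{1,q}$-bounded with $q < 2$. Weak $L^2$-compactness supplies a limit $v \in L^2(\R^3;\R^{3\times 3})$; to identify it, I would test against $\varphi \in C^\infty_c(\R^3;\R^{3\times 3})$ and change variables,
\[
\int_{\R^3} \chi_{w_\e(\Omega)} \nabla m_\e : \varphi \dx = \int_\Omega \nabla(m_\e \circ w_\e)(x) : (\nabla w_\e(x))^{-T}\,\varphi(w_\e(x))\,|\det \nabla w_\e(x)|\,\dx.
\]
The factor $(\nabla w_\e)^{-T}\varphi(w_\e)|\det \nabla w_\e|$ converges strongly to $\varphi$ in $L^{q'}(\Omega;\R^{3\times 3})$ thanks to the $L^p$ and $L^{p/3}$ convergences together with the uniform continuity of $\varphi$, while $\nabla(m_\e \circ w_\e) \rightharpoonup \nabla m$ weakly in $L^q$, identifying $v = \chi_\Omega \nabla m$. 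The companion convergence $\chi_{w_\e(\Omega)} m_\e \to \chi_\Omega m$ strongly in every $L^r(\R^3)$ follows by combining the strong $L^r$-convergence of $m_\e \circ w_\e$ on $\Omega$, the $L^\infty$-bound from the Heisenberg constraint, and $|\Omega \triangle w_\e(\Omega)| \to 0$, the latter being a consequence of the uniform convergence $w_\e \to id$ (valid since $W^{1,p} \hookrightarrow C^0$ for $p > 3$).
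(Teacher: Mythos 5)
Your first three stages (rigidity and boundary estimate for $u_\e$ and $w_\e$, the $L^{p/3}$ convergence of the Jacobians, de la Vall\'ee Poussin equi-integrability of $|\det\nabla w_\e|^{-1}$, and the double-H\"older $W^{1,q}$ bound for $m_\e\circ w_\e$ with Rellich--Kondrachov and the saturation constraint) coincide with the paper's Steps 1--5 and are sound. The genuine gap is in your final ``Eulerian identification''. You assert that $\chi_{w_\e(\Omega)}m_\e\to\chi_\Omega m$ in $L^r(\R^3;\R^3)$ follows by ``combining'' the strong convergence of $m_\e\circ w_\e$ with $|\Omega\,\triangle\,w_\e(\Omega)|\to 0$ and the $L^\infty$ bound. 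This skips the central difficulty of the proposition: one must show that $\int_{\Omega_\delta}|m_\e-m_\e\circ w_\e|\dx\to 0$, i.e.\ transfer information from the Lagrangian composition back to the Eulerian field. Since $m_\e$ changes with $\e$ and is only $H^1$-bounded on the deformed set, uniform convergence $w_\e\to id$ alone does not give this (there is no uniform modulus of continuity for the family $(m_\e)$). The paper's Step 6 handles precisely this point, via equi-integrability, an $H^1$ extension $M_\e^\delta$ of $m_\e$, and a Lusin-type approximation making $M_\e^\delta$ $\e^\lambda$-Lipschitz off a small set, which is then played against the quantitative rate $\|w_\e-id\|_{C^0}\le C\e^{2\alpha/p}$ from \eqref{eq:convrate} so that $\e^{2\alpha/p+\lambda}\to 0$. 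Some argument of this type is indispensable and is absent from your proposal.

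The weak $L^2$ identification of $\chi_{w_\e(\Omega)}\nabla m_\e$ also does not close as written. After your change of variables the test factor is $(\nabla w_\e)^{-T}\varphi(w_\e)\,|\det\nabla w_\e|={\rm cof}(\nabla w_\e)\,\varphi(w_\e)$ (up to transposition), and since $\nabla w_\e$ is controlled only in $L^p$, this factor is bounded and strongly convergent only in $L^{p/2}(\Omega)$, not in $L^{q'}$. But $\nabla(m_\e\circ w_\e)$ converges weakly only in $L^q$ with $q<\tfrac{2p}{p+2}$, hence $q'>\tfrac{2p}{p-2}$; since $\tfrac{2p}{p-2}>\tfrac p2$ whenever $3<p<6$ (and for $p\ge 6$ one would still need $s\ge\tfrac{p}{p-6}$, which is not assumed beyond \eqref{5.15a}), the H\"older pairing of a weak-$L^q$ sequence against a factor convergent only in $L^{p/2}$ is not justified. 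Repairing this requires an extra idea, e.g.\ a truncation of the cofactor factor with a vanishing-support/equi-integrability argument (as the paper does for $g_\e$ in Section \ref{sec_2sc}), or, more simply, the paper's own route in Step 7: since $\Omega_\delta\subset w_\e(\Omega)$ for small $\e$, one has $\|\nabla m_\e\|_{L^2(\Omega_\delta)}\le C$ directly, one extracts a weak $W^{1,2}(\Omega_\delta)$ limit identified with $m$ through the (corrected) strong Eulerian convergence, and the boundary layer $\R^3\setminus\Omega_\delta$ is disposed of via $\|\varphi\|_{L^2(C_\delta)}\to 0$. As it stands, both assertions of your last stage are unproved.
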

\begin{proof}
    We subdivide the proof into several steps.
    \\
    {\bf{Step 1: Convergence of deformations.}} We first prove the following two inequalities:
    \begin{equation}
    \label{est_1}
        \int_\Omega |\nabla u_\e|^2 \dx \le C \left( F_\e(u_\e,m_\e) + \int_\gamma |g|^2 \dif \mathcal{H}^2 \right),
    \end{equation}
    and
    \begin{equation}
    \label{est_2}
        \int_\Omega |\e^\alpha \nabla u_\e|^p \dx \le C \e^{2\alpha} \left( F_\e(u_\e, m_\e) + \int_\gamma |g|^2 \dif \mathcal{H}^2 \right).
    \end{equation}
    Set
    \begin{equation*}
        g_p(t) := \max\{t^2, t^p\}, \quad \textnormal{for } t \in [0, +\infty).
    \end{equation*}
    Thanks to \ref{H2}, we have that
    \begin{equation}
    \label{rigest}
    \begin{aligned}
        \int_\Omega g_p(\dist(\Id + \e^\alpha \nabla u_\e; SO(3))) \dx
        \le & \, C \int_\Omega W\left(\frac{x}{\e}, \Id + \e^\alpha \nabla u_\e(x), m_\e(x + \e^\alpha u(x))\right) \dx \\
        \le & \, C \e^{2\alpha} F_\e(u_\e, m_\e).
    \end{aligned}
    \end{equation}
    In view of \eqref{rigest}, by Theorem \ref{thm_rigest} we find rotations $R_\e \in SO(3)$ such that
    \begin{equation*}
        \int_\Omega |\nabla w_\e - R_\e|^2 \dx \le C \int_\Omega g_p(\dist(\nabla w_\e, SO(3))) \dx \le C \e^{2\alpha} F_\e(u_\e, m_\e).
    \end{equation*}
    On the one hand, by the triangle inequality we obtain
    \begin{equation*}
        \e^{2\alpha} \int_\Omega |\nabla u_\e|^2 \dx = \int_\Omega |\nabla w_\e - \Id|^2 \dx \le C \left[ \e^{2 \alpha} F_\e(u_\e, m_\e) + |R_\e - \Id|^2 \right].
    \end{equation*}
    On the other hand, arguing as in \cite[Proposition 3.4]{DNP02}, we deduce
    \begin{equation}
    \label{rot-id}
        |R_\e - \Id|^2 \le C \e^{2\alpha} \left( F_\e(u_\e, m_\e) + \int_\gamma |g|^2 \dif \mathcal{H}^2 \right).
    \end{equation}
    Plugging the last two estimates together, we obtain  \eqref{est_1}, and, hence, the weak precompactness of the displacements $(u_\e)_\e$ in $W^{1,2}(\Omega;\mathbb{R}^3)$.
    \\
    From \eqref{rigest} and Theorem \ref{thm_rigest}, for the same rotation $R_\e$ as in \eqref{est_1}, there holds
    \begin{equation*}
        \int_\Omega |\nabla w_\e - R_\e|^p \dx \le C \int_\Omega g_p(\dist(\nabla w_\e, SO(3))) \dx \le C \e^{2\alpha} F_\e(u_\e, m_\e).
    \end{equation*}
    Hence, using the facts that $p > d = 3$ and that $|R_\e - \Id| \le 2\sqrt{d}$, since $R_\e \in SO(3)$, we obtain
    \begin{equation*}
    \begin{aligned}
        \int_\Omega |\e^\alpha \nabla u_\e|^p \dx
        &\le 2^{p-1} \left( \int_\Omega |\nabla w_\e - R_\e|^p \dx + |R_\e - \Id|^p \right) \\
        &\le C \left( \e^{2\alpha} F_\e(u_\e, m_\e) + |R_\e - \Id|^2 \right),
    \end{aligned}
    \end{equation*}
    which, together with \eqref{rot-id}, yields \eqref{est_2}.
    \\
    Now, \eqref{est_2} implies that
    \begin{equation}\label{eq:convrate}
        \| \nabla w_\e - \Id \|_{L^p(\Omega; \R^{3 \times 3})} \le C \e^\frac{2 \alpha}{p}
    \end{equation}
    for a positive constant $C$ independent of $\e$, which in turn implies that $w_\e \to id$ strongly in $W^{1,p}(\Omega; \R^3)$.
    
    \smallskip
    {\bf{Step 2: Convergence of the determinants.}} Recalling that the determinant is a multilinear function, we deduce that 
    \begin{equation*}
    \begin{aligned}
        \det(\nabla w_\e) &= \det(e_1+\e^\alpha\partial_1 u_\e|e_2+\e^\alpha\partial_2 u_\e|e_3+\e^\alpha\partial_3 u_\e)\notag\\
        &= 1+ \det(\e^\alpha \nabla u_\e)+\det(e_1|e_2|\e^\alpha\partial_3u_\e) +\det(e_1|\e^\alpha\partial_2 u_\e|e_3)+ \det(e_1|\e^\alpha\partial_2 u_\e|\e^\alpha\partial_3u_\e)\notag\\
        &\quad +\det(\e^\alpha\partial_1 u_\e|e_2|e_3)+\det(\e^\alpha\partial_1 u_\e|e_2|\e^\alpha\partial_3 u_\e) +\det(\e^\alpha\partial_1 u_\e|\e^\alpha\partial_2 u_\e|e_3)\notag\\
        &= 1+ \det(\e^\alpha \nabla u_\e)+ \e^\alpha\partial_3  u_\e^3+ \e^\alpha\partial_2 u_\e^2 +\e^{2\alpha}\partial_2 u_\e^2\partial_3 u_\e^3- \e^{2\alpha}\partial_3 u_\e^2\partial_2 u_\e^3 \notag\\
        &\quad + \e^\alpha\partial_1u_\e^1+ \e^{2\alpha}\partial_1u_\e^1\partial_3 u_\e^3- \e^{2\alpha}\partial_3 u_\e^1\partial_1 u_\e^3 + \e^{2\alpha}\partial_1 u_\e^1\partial_2 u_\e^2- \e^{2\alpha}\partial_2 u_\e^1\partial_1 u_\e^2.
    \end{aligned}
    \end{equation*}
    Thanks to the triangle inequality, for every $\sigma \in \mathbb{N}$ we find
    \begin{equation*}
    \begin{aligned}
        \int_\Omega |\det(\nabla w_\e)-1|^\sigma \dx \le C\left(\int_\Omega |\e^\alpha\nabla u_\e|^{3\sigma}\dx +  \int_\Omega |\e^\alpha\nabla u_\e|^\sigma \dx + \int_\Omega |\e^\alpha\nabla u_\e|^{2\sigma}\dx\right).
    \end{aligned}
    \end{equation*}
    Choosing $\sigma := \min\left\{p, \tfrac{p}{2}, \tfrac{p}{3}\right\}$, namely $\sigma: = \tfrac{p}{3}$, we can apply \eqref{est_2} along with \eqref{suplim}, obtaining that $\det \nabla w_\e \to 1$ strongly in $L^\frac{p}{3}(\Omega)$. To be precise, we infer $\|\det \nabla w_\e - 1\|_{L^{p/3}(\Omega)} \le C \e^\alpha$ every $\e > 0$. In particular, Vitali's Convergence Theorem ensures that, since $|\Omega| < +\infty$, the sequence $(\det\nabla w_\e)_\e$ is equi-integrable in $\Omega$.

    
    \smallskip
    {\bf{Step 3: Convergence of the inverses.}} First, we prove that the family $(1/\det \nabla w_\e)_\e$ is equi-integrable. Note that, since $w_\e \in \Y$, see \eqref{eq:class-Y}, for every $\e > 0$ it makes sense to consider the inverse of the determinant. Let us define the function $\tilde h : (0, +\infty) \to (0, +\infty)$ by setting
    \begin{equation*}
        \tilde h(t) := h \left(\frac{1}{t}\right).
    \end{equation*}
    Thanks to \eqref{grwt-h}, we have that
    \begin{equation}
    \label{aux-comp}
        \lim_{t \to +\infty} \frac{\tilde h(t)}{t} = \lim_{t \to +\infty} \frac{1}{t} \, h \left(\frac{1}{t}\right) = \lim_{k \to 0^+} k h(k) = +\infty.
    \end{equation}
    Moreover, thanks to \ref{H2}, \eqref{suplim} and the definition of $\tilde h$ we have that, for every $\e > 0$,
    \begin{equation*}
        \int_\Omega \tilde h \left(\frac{1}{\det \nabla w_\e}\right) \dx = \int_\Omega h(\det \nabla w_\e) \dx \le \e^{2 \alpha} F_\e(u_\e, m_\e) < + \infty.
    \end{equation*}
    Applying De la Vallée--Poussin criterion (see, e.g., \cite[Theorem 1.3]{ekeland-temam}), it follows that $(1/\det \nabla w_\e)_\e$ is equi-integrable, as desired.
    \\
    Now, for any sufficiently small fixed $\delta > 0$, we denote for any $\e > 0$
    \begin{equation}
    \label{B_eps_d}
        B^\delta_\e := \{ x \in \Omega \ : \ |\det \nabla w_\e-1| > \delta \}.
    \end{equation}
    It is not difficult to see that, by Step 2 of the proof,
    \begin{equation*}
        |B^\delta_\e| \le C \delta^{-1} \|\det \nabla w_\e - 1\|_{L^{p/3}(\Omega)}^\frac{p}{3} \le C \delta^{-1} \e^{\alpha \frac{p}{3}}.
    \end{equation*}
    By the equi-integrability of $(|\det\nabla w_\e|^{-1})_\e$, we have 
    \begin{equation*}
        \int_{B^\delta_\e} \left| |\det\nabla w_\e|^{-1} - 1 \right| \dx \to 0.
    \end{equation*}
    Moreover,
    \begin{equation*}
        \int_{\Omega \setminus B^\delta_\e} \left| |\det\nabla w_\e|^{-1} - 1 \right| \dx \le C \int_{\Omega \setminus B^\delta_\e} \left| |\det \nabla w_\e| - 1 \right| \dx \le C \e^\alpha,
    \end{equation*}
    which yields the claim.
    
    \smallskip
    {\bf{Step 4: Weak convergence of the compositions.}} First, we show that
    \begin{equation}
    \label{5.14}
        \|\nabla (m_\e \circ w_\e)\|_{L^q(\Omega)} \le C \quad \textnormal{ with } q := \frac{2ps}{p+s(p+2)} \in \left(1, \frac{2p}{p+2} \right).
    \end{equation}
    Let us, at first, consider $q \in (1,2)$. For $l := 2/q$, and $l':=\tfrac{2}{2-q}$ its conjugate exponent, we have by H\"older's inequality
    \begin{equation*}
    \begin{aligned}
        \int_\Omega |\nabla (m_\e \circ w_\e)|^q \dx
        &\le \left( \int_\Omega \left|\nabla m_\e \circ w_\e (\det\nabla w_\e)^\frac{1}{2}\right|^{ql} \dx \right)^\frac{1}{l}
        \left( \int_\Omega \left|\nabla w_\e (\det\nabla w_\e)^{-\frac{1}{2}}\right|^{ql'} \dx \right)^\frac{1}{l'} \\
        &= \left( \int_\Omega \left|\nabla m_\e \circ w_\e (\det\nabla w_\e)^\frac{1}{2}\right|^2 \dx \right)^\frac{q}{2}
        \left( \int_\Omega \left|\nabla w_\e (\det\nabla w_\e)^{-\frac{1}{2}}\right|^\frac{2q}{2-q} \dx \right)^{\frac{2-q}{2}}. 
    \end{aligned}
    \end{equation*} 
    We know already that the first term is bounded by \eqref{suplim}. Further, let us choose $\mu := \frac{p(2-q)}{2q}$. Note that
    \begin{equation*}
        \mu > 1 \quad \iff \quad q < \frac{2p}{p+2} \in (1,2),
    \end{equation*}
    meaning that it allows us to restrict the choice of $q$, namely
    \begin{equation*}
        q \in \left(1, \frac{2p}{p+2} \right).
    \end{equation*}
    Again by means of H\"older's inequality, using $\mu$ and its conjugate exponent $\mu' := \tfrac{p(2-q)}{p(2-q)-2q}$, we get
    \begin{equation*}
    \begin{aligned}
        \int_\Omega \left|\nabla w_\e (\det\nabla w_\e)^{-\frac{1}{2}}\right|^\frac{2q}{2-q} \dx 
        &\le \left( \int_\Omega |\nabla w_\e|^\frac{2q\mu}{2-q} \dx \right)^\frac{1}{\mu}
        \left( \int_\Omega |\det\nabla w_\e|^{-\frac{q\mu'}{2-q}} \dx \right)^\frac{1}{\mu'} \\
        &= \left( \int_\Omega |\nabla w_\e|^p \dx \right)^\frac{2q}{p(2-q)} \left( \int_\Omega  |\det\nabla w_\e|^{-\frac{pq}{p(2-q) - 2q}} \dx \right)^\frac{p(2-q) - 2q}{p(2-q)}.
    \end{aligned}
    \end{equation*}
    By Step 1 of the proof, we know that the first term is bounded. Moreover, from \eqref{5.15a} we know that $|\det\nabla w_\e|^{-1} \in L^s(\Omega)$, yielding that \eqref{5.14} holds if
    \begin{equation*}
        s = \frac{pq}{p(2-q)-2q} \quad \iff \quad q := \frac{2ps}{p+s(p+2)}.
    \end{equation*}
    Note that the condition $s > \tfrac{p}{p-2}$ implies that $q > 1$, while it is immediate to verify that
    \begin{equation*}
        q := \frac{2ps}{p+s(p+2)} < \frac{2p}{p+2}.
    \end{equation*}
    This, along with the constraint $|m_\e \circ w_\e| = 1$, entails that we have that $\| m_\e \circ w_\e\|_{W^{1,q}(\Omega;\R^3)} \le C$, with $q$ defined as above, implying that $m_\e \circ w_\e \rightharpoonup m$ weakly in $W^{1,q}(\Omega; \R^3)$.
    
    \smallskip
    {\bf{Step 5: Strong convergence of the compositions.}} In view of Step 4 of the proof, an application of the Rellich-Kondrachov Theorem ensures that 
    \begin{equation*}
        m_\e \circ w_\e \to m \qquad \textnormal{ strongly in } L^\sigma(\Omega; \R^3) \ \textnormal{ for any } \sigma \in [1, q^*).
    \end{equation*}
    Due to the constraint $|m_\e \circ w_\e| = 1$ a.e. in $\Omega$, $m_\e \circ w_\e$ turns out to be bounded in any $L^r(\Omega; \R^3)$, with $r \in [1, +\infty)$. Therefore, Lemma \ref{l5.5} implies that $m_\e \circ w_\e$ strongly converges to $m$ in $L^r(\Omega; \R^3)$ for any $r \in [1, +\infty)$.
    \\
    Eventually, since in particular $m_\e \circ w_\e \to m$ strongly in $L^1(\Omega; \R^3)$, we get that, up to a subsequence, $m_\e \circ w_\e \to m$ a.e. in $\Omega$. Therefore, since $(u_\e, m_\e) \in \A_\e$, $|m_\e\circ w_\e| = 1$, it allows us to conclude that $m \in W^{1,q}(\Omega; \s^2)$.
    
    \smallskip
    {\bf{Step 6: Strong convergence of the extensions.}}
    In view of Lemma \ref{l5.5}, in order to prove that $\chi_{w_\e(\Omega)} m_\e \to \chi_\Omega m$ strongly in $L^r(\R^3; \R^3)$ for any $r \in [1,+\infty)$ it is enough to show the boundedness of $\chi_{w_\e(\Omega)} m_\e$ in $L^r(\R^3; \R^3)$ for any $r \in [1, +\infty)$ along with the strong convergence in $L^1$, namely
    \begin{equation}
    \label{eq:convergenceinL1}
        \|\chi_{w_\e(\Omega)} m_\e - \chi_\Omega m\|_{L^1(\R^3; \R^3)} \to 0 \qquad \textnormal{ as } \e \to 0.
    \end{equation}
    The boundedness of $\chi_{w_\e(\Omega)} m_\e$ in $L^r(\R^3; \R^3)$ for any $r \in [1, +\infty)$ follows from the saturation constraint $|m_\e \circ w_\e| = 1$ and the boundedness of $\det \nabla w_\e$ in $L^1(\Omega)$. Indeed, for any fixed $r \in [1, +\infty)$, 
    \begin{equation}
    \label{eq:boundextension}
    \begin{aligned}
        \int_{\R^3} |\chi_{w_\e(\Omega)} m_\e|^r \dif z &= \int_{w_\e(\Omega)} |m_\e|^r \dif z = \int_\Omega |m_\e(w_\e(x))|^r \det \nabla w_\e \dx \\
        &= \int_\Omega \det\nabla w_\e \dx < +\infty.
    \end{aligned} 
    \end{equation}
    We now turn to prove \eqref{eq:convergenceinL1}. To this end, for a fixed $\delta > 0$, we set
    \begin{equation*}
        \Omega_\delta := \{x \in \Omega \ : \ \dist(x, \partial\Omega) > \delta \}.
    \end{equation*}
    Without loss of generality, we may assume that $\Omega_\delta$ is a Lipschitz set. Since $w_\e \to id$ in $W^{1,p}(\Omega; \R^3)$ and $p > 3$, we have that $w_\e \to id$ in $C^0(\overline{\Omega}; \R^3)$. Hence, for $\e > 0$ small enough, we may assume that $\Omega_\delta \subset w_\e(\Omega)$ (thanks to the degree theory, see \cite{FG} and \cite[Proof of Proposition 3.1]{AKM24}). Now, 
    \begin{equation}
    \label{eq:convL1split}
        \|\chi_{w_\e(\Omega)} m_\e - \chi_\Omega m\|_{L^1(\R^3; \R^3)} = \int_{\Omega_\delta} |\chi_{w_\e(\Omega)} m_\e - \chi_\Omega m| \dx + \int_{\R^3 \setminus \Omega_\delta} |\chi_{w_\e(\Omega)} m_\e - \chi_\Omega m| \dx.
    \end{equation}
    The second integral on the right-hand side can be rendered negligible as $\delta \to 0$. Indeed, since $\chi_{w_\e(\Omega)} m_\e$ is bounded in $L^r(\R^3; \R^3)$ for any $r \in [1, +\infty)$, we infer that
    \begin{equation*}
        \int_{\R^3 \setminus \Omega_\delta} |\chi_{w_\e(\Omega)} m_\e - \chi_\Omega m| \dx \le C \mathcal{L}^3( w_\e(\Omega) \, \Delta \, \Omega_\delta) \to C \mathcal{L}^3( w_\e(\Omega)\, \Delta \, \Omega) \quad \textnormal{ as } \delta \to 0.
    \end{equation*}
    As for the first integral on the right-hand side of \eqref{eq:convL1split},  note that 
    \begin{align}
        \int_{\Omega_\delta}|\chi_{w_\e(\Omega)}m_\e- \chi_{\Omega}m| \dx 
        &\le \int_{\Omega_\delta} |\chi_{w_\e(\Omega)}m_\e - m_\e\circ w_\e| \dx + \int_{\Omega_\delta}|m_\e\circ w_\e - \chi_{\Omega}m| \dx \notag \\
        &\le \int_{\Omega_\delta} |\chi_{w_\e(\Omega)}m_\e - m_\e\circ w_\e| \dx + \int_{\Omega}|m_\e\circ w_\e - m| \dx.
    \label{eq:convOmegadelta}
    \end{align}
    Here, we have used the fact that $\Omega_\delta\subseteq \Omega\cap w_\e(\Omega)$. By Step 5, we can easily deduce that the second integral in \eqref{eq:convOmegadelta} converges to $0$ as $\e\to 0$. Furthermore, since $\chi_{w_\e(\Omega)} m_\e = m_\e$ a.e. in $\Omega_\delta$, the first integral in \eqref{eq:convOmegadelta} turns into the following
    \begin{equation}
    \label{eq:estfirstint1}
        \int_{\Omega_\delta} |\chi_{w_\e(\Omega)}m_\e - m_\e\circ w_\e| \dx = \int_{\Omega_\delta} |m_\e - m_\e\circ w_\e| \dx.
    \end{equation}
    In view of Steps 2 and 5, the Dunford-Pettis Theorem enables us to have the equi-integrability of $\det \nabla m_\e$ as well as $m_\e\circ w_\e$. Furthermore, from \eqref{eq:boundextension}, it follows that $\chi_{\Omega_\delta}m_\e$ is equi-integrable in $\Omega_\delta$ too, indeed
    \begin{equation*}
    \begin{aligned}
        \int_{\R^3} |\chi_{\Omega_\delta} m_\e|^r \dif z &= \int_{\Omega_\delta} |m_\e|^r \dif z \le \int_{w_\e(\Omega)} |m_\e|^r \dif z \\
        &= \int_\Omega |m_\e(w_\e(x))|^r \, \det \nabla w_\e  \dx = \int_\Omega \det\nabla w_\e\dx < +\infty.
    \end{aligned} 
    \end{equation*}
    Therefore, for $\eta>0$ there exists $\rho>0$ such that for any $A\subseteq\Omega_\delta$ measurable 
    \begin{equation*}
        \mathcal{L}^3(A)<\rho \ \Rightarrow \ \limsup_{\e\to 0} \int_A |m_\e\circ w_\e - m_\e| \dx < \eta. 
    \end{equation*}
    Fix $\overline{\delta}\in (0, \delta)$ such that $\mathcal{L}^3(\Omega_{\overline{\delta}} \setminus \Omega_\delta)<\rho$ and set $A_{\delta, \e} := w^{-1}_{\e}(\Omega_\delta)$. For $\e>0$ small enough, we have that $A_{\delta, \e}\subseteq \Omega_{\overline{\delta}}$.  Therefore, 
    \begin{equation}
    \label{eq:estfirstint2}
        \int_{\Omega_\delta} |m_\e\circ w_\e - m_\e| \dx = \int_{\Omega_\delta\cap A_{\delta, \e}} |m_\e\circ w_\e - m_\e| \dx + \int_{\Omega_\delta\setminus A_{\delta, \e}} |m_\e\circ w_\e - m_\e| \dx.
    \end{equation}
    The integral over $\Omega_\delta\setminus A_{\delta, \e}$ can be estimated as follows. First, note that a change of variables implies that 
    \begin{equation*}
        \mathcal{L}^3(\Omega_\delta) = \mathcal L^3 (w_\e(A_{\delta, \e})) = \int_{w_\e(A_\delta, \e)} \dx = \mathcal L^3(A_{\delta, \e}) + \int_{\A_{\delta, \e}} ({\rm det}\nabla w_\e -1) \dx.
    \end{equation*}
    Therefore,
    \begin{equation*}
    \begin{aligned}
        \limsup_{\e\to 0} \, \mathcal L^3(\Omega_\delta\setminus A_{\delta, \e})
        &\le \limsup_{\e\to 0} \, \left[\mathcal{L}^3 (\Omega_{\overline{\delta}})- \mathcal{L}^3(A_{\delta, \e}) \right] \\
        &\le \limsup_{\e\to 0} \left[\mathcal L^3(\Omega_{\overline{\delta}}\setminus\Omega_\delta) + \int_{A_{\delta, \e}} (\det\nabla w_\e -1) \dx \right] \\
        &< \rho.
    \end{aligned}
    \end{equation*}
    Here, we have used Step 2 as well as $A_{\delta, \e} \subseteq \Omega_{\overline{\delta}}$. Thanks to the equi-integrability of $m_\e\circ w_\e$ and $m_\e$ on $\Omega_\delta$, we conclude that 
    \begin{equation}
    \label{eq:estfirstint3}
        \limsup_{\e\to 0} \int_{\Omega_{\delta}\setminus A_{\delta, \e}} |m_\e\circ w_\e - m_\e| \dx < \eta.
    \end{equation}
    We now evaluate the integral over $\Omega_{\delta}\cap A_{\delta, \e}$ in \eqref{eq:estfirstint2}. To this end, we fix $\lambda\in (-\tfrac{1}{\rho}, 0)$ and extend $\chi_{\Omega_\delta}m_\e$ from $\Omega_\delta$ to a map $M_\e^\delta\in W^{1,2}(\R^3; \R^3)$ with $\|M_\e^\delta\|_{W^{1,2}(\R^3; \R^3)} \le C(\Omega_\delta) \|\chi_{\Omega_\delta}m_\e\|_{W^{1,2}(\Omega_\delta; \R^3)}$. Then, we apply the Lusin approximation of Sobolev functions: for every $\e>0$ there exists a set $G_{\delta, \e}$ such that $M_\e^\delta$ is $\e^\lambda$-Lipschitz on $G_{\delta, \e}$ and 
    \begin{equation}
    \label{eq:Lusin1}
        \mathcal{L}^3(\R^d \setminus G_{\delta, \e}) \le \e^{-2\lambda} \int_{\R^d} |\nabla M_\e^\lambda|^2 \dx \le C(\Omega_\delta) \e^{-2\lambda} \|\chi_{\Omega_\delta}m_\e\|_{W^{1,2}(\Omega_\delta; \R^3)}^2.
    \end{equation}
    Set $X_{\delta, \e} := w_\e^{-1}(G_{\delta, \e})$. Since $m_\e = M_\e^\delta$ on $\Omega_\delta \cap G_{\delta, \e}$ and keeping in mind convergence rate \eqref{eq:convrate}, it follows that
    \begin{equation*}
    \begin{aligned}
        \int_{\Omega_\delta\cap A_{\delta, \e}\cap X_{\delta, \e}\cap G_{\delta, \e}} |m_\e\circ w_\e -  m_\e| \dx 
        & = \int_{\Omega_\delta\cap A_{\delta, \e}\cap X_{\delta, \e}\cap G_{\delta, \e}} |M_\e^\delta \circ w_\e - M_\e^\delta| \dx \\
        &\le \e^\lambda \mathcal{L}^3(\Omega) \| w_\e - id\|_{C^0} \\
        &\le \e^{2 \tfrac{\alpha}{p} + \lambda} \mathcal{L}^3(\Omega).
    \end{aligned}
    \end{equation*}
    Deploying \eqref{eq:Lusin1} along with \eqref{eq:convrate} and a change of variables, we deduce that
    \begin{equation*}
        \lim_{\e\to 0} \mathcal L^3(\Omega_{\delta} \setminus G_{\delta, \e})=0 \qquad \textnormal{and} \qquad \lim_{\e\to 0} \mathcal L^3 (\Omega_\delta\setminus X_{\delta, \e})=0.
    \end{equation*}
    Thus, 
    \begin{equation}
    \label{eq:estfirstint4}
        \limsup_{\e\to 0} \int_{(\Omega_\delta \cap A_{\delta, \e}) \setminus (X_{\delta, \e\cap G_{\delta, \e}})} |m_\e^\delta\circ w_\e -  m_\e| \dx \le \eta.
    \end{equation}
    Bearing in mind \eqref{eq:convOmegadelta} and gathering \eqref{eq:estfirstint1}, \eqref{eq:estfirstint2}, \eqref{eq:estfirstint3} and \eqref{eq:estfirstint4}, we conclude that 
    \begin{equation*}
        \limsup_{\e\to 0} \int_{\Omega_\delta} |m_\e\circ w_\e - m| \dx < 3\eta.
    \end{equation*}
    Since $\eta>0$ is arbitrary, we get \eqref{eq:convergenceinL1}.
    
    \smallskip
    {\bf{Step 7: Convergence of the extensions of the gradients.}}  
    First, note that $\nabla(\chi_{\Omega_\delta}m_\e)$ is bounded in $L^2(\Omega_\delta; \R^{3\times 3})$. Indeed, 
    \begin{equation}
    \label{eq:boundgradextension}
        \int_{\Omega_\delta} |\nabla(\chi_{\Omega_\delta} m_\e)|^2 \dx \le \int_{\Omega_\delta} |\nabla m_\e|^2 \dx \le \int_{w_\e(\Omega)} |\nabla m_\e|^2 \dx \le \frac{1}{C_1} \, F_\e(u_\e, m_\e) < +\infty.
    \end{equation}
    Moreover, by \eqref{eq:boundextension}, $\chi_{\Omega_\delta}m_\e$ turns out to be bounded in $L^2(\Omega_\delta; \R^3)$. Thus, $\chi_{\Omega_\delta}m_\e$ is bounded in $W^{1,2}(\Omega_\delta; \R^3)$ uniformly for all $\delta>0$.  By a diagonal argument, there exists $\eta\in W^{1,2}_{\loc}(\Omega; \R^3)$ such that 
    \begin{equation*}
        \chi_{\Omega_\delta}m_\e \rightharpoonup  \eta \qquad \textnormal{weakly in } W^{1,2}_{\loc}(\Omega; \R^3).
    \end{equation*}
    From Step 6, it follows that $\eta = \chi_{\Omega}m$. In particular, we have that $m_\e\rightharpoonup m$ weakly in $W^{1,2}(\Omega_\delta; \R^3)$.
    \\
    We now show the weak convergence of $\chi_{w_\e(\Omega)}\nabla(m_\e)$. To this end, for a test function $\varphi \in L^2(\R^3; \R^{3 \times 3})$, we deduce that   
    \begin{equation*}
    \begin{aligned}
        &\int_{\R^3} (\chi_{w_\e(\Omega)} \nabla m_\e - \chi_\Omega \nabla m) : \varphi \dx \\
        &\quad = \int_{\Omega_\delta} (\chi_{w_\e(\Omega)} \nabla m_\e - \chi_\Omega \nabla m) : \varphi \dx + \int_{\R^3 \setminus \Omega_\delta} (\chi_{w_\e(\Omega)} \nabla m_\e - \chi_\Omega \nabla m) : \varphi \dx.
    \end{aligned}
    \end{equation*}
    Since $\Omega_\delta \subset w_\e(\Omega) \cap \Omega$ and due the fact that $m_\e=\chi_{\Omega_\delta} m_\e \rightharpoonup m_0$ weakly in $W^{1,2}(\Omega_\delta; \R^3)$, the first integral goes to $0$. As for the second integral, we can apply the same arguments as in \cite{AKM24}. Namely, since
    \begin{equation*}
        \left| \int_{\R^3 \setminus \Omega_\delta} (\chi_{w_\e(\Omega)} \nabla m_\e - \chi_\Omega \nabla m_0) : \varphi \dx \right| \le C \| \varphi \|_{L^2(C_\delta)},
    \end{equation*}
    where $C_\delta := \{ x \in \R^3 \ : \ \dist(x, \partial \Omega) < \delta \}$. Since $|C_\delta| \to 0$ as $\delta \to 0$, we have the weak convergence of $\chi_{w_\e(\Omega)}\nabla m_\e$ to $\chi_\Omega\nabla m_0$ in $L^2(\R^3; \R^{3 \times 3})$.   
\end{proof}



\section{Two-scale convergence of the magnetizations}
\label{sec_2sc}

Before proving the main result, we need to investigate the two-scale behaviour of the gradient of the magnetization. The main result of this section reads as follows.

\begin{prop}
\label{prop:two_scale}
    Let $\alpha > 0$. Let $(u_\e, m_\e) \in \A_\e$ be such that $w_\e := id + \e^\alpha u_\e \in \Y$, and let
    \begin{equation*}
        \sup_{\e > 0} F_\e(u_\e, m_\e) < + \infty,
    \end{equation*}
    where the energy $F_\e$ is defined as in \eqref{resc-en}. Then we have up to a subsequence that
    \begin{equation}
    \label{5.10}
		\nabla m_\e \circ w_\e |\det\nabla w_\e|^\frac{1}{2} \wts \nabla m(x) + \nabla_y \phi(x,y) \quad \textnormal{ weakly two-scale in } L^2(\Omega \times Y; \R^3),
    \end{equation}
    where $\phi \in L^2(\Omega; H^1_\per(Y; \R^3))$.
    \qed
\end{prop}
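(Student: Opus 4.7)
The plan is to derive an $L^2$-bound on the quantity $g_\e := \nabla m_\e \circ w_\e \, |\det \nabla w_\e|^{1/2}$, extract a weakly two-scale convergent subsequence, and then identify its limit through a factorisation and product argument that leverages the $W^{1,q}$-compactness of the composition $m_\e \circ w_\e$ established in Proposition \ref{compact}.

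The $L^2$-boundedness of $g_\e$ is immediate: the lower bound $a(y) \geq C_1 > 0$ in \eqref{grw-a}, combined with the uniform energy bound, yields $\int_\Omega |\nabla m_\e \circ w_\e|^2 |\det \nabla w_\e|\,\mathrm{d}x \leq C$, so $\|g_\e\|_{L^2(\Omega;\R^{3\times 3})} \leq C$. Standard two-scale compactness then provides $g_0 \in L^2(\Omega \times Y;\R^{3\times 3})$ and a (non-relabelled) subsequence with $g_\e \wts g_0$.

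To identify $g_0$ I plan to exploit the chain rule factorisation
\begin{equation*}
\nabla(m_\e \circ w_\e) = g_\e\, L_\e, \qquad L_\e := \nabla w_\e \, |\det \nabla w_\e|^{-1/2},
\end{equation*}
and show that $L_\e \to \Id$ strongly in $L^\rho(\Omega;\R^{3\times 3})$ for some $\rho > 2$. Indeed, $\nabla w_\e \to \Id$ in $L^p$ by Step~1 of Proposition \ref{compact}, while Steps~2--3 combined with Lemma \ref{l5.5} give $|\det \nabla w_\e|^{-1/2} \to 1$ strongly in $L^{s-\eta}$ for every $\eta>0$. Hölder's inequality then produces strong $L^\rho$-convergence for any $\rho$ arbitrarily close to $\bigl(\tfrac{1}{p}+\tfrac{1}{2s}\bigr)^{-1}$, and the hypothesis $s > \tfrac{p}{p-2}$ from \ref{H2} is exactly equivalent to $\tfrac{1}{p}+\tfrac{1}{2s} < \tfrac{1}{2}$, which secures $\rho > 2$. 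The standard product rule for two-scale convergence (weakly two-scale $L^2$ times strongly $L^\rho$ gives weakly two-scale $L^\tau$ with $\tfrac{1}{\tau} = \tfrac{1}{2} + \tfrac{1}{\rho}$) then yields $\nabla(m_\e \circ w_\e) = g_\e L_\e \wts g_0$ weakly two-scale in $L^\tau$, and a direct algebraic check verifies $\tau = q$, with $q = \tfrac{2ps}{p+s(p+2)}$ being precisely the exponent from Proposition \ref{compact}.

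Finally, since $m_\e \circ w_\e \rightharpoonup m$ weakly in $W^{1,q}(\Omega;\R^3)$ (Step~4 of Proposition \ref{compact}), the two-scale convergence theorem for gradients in its $L^q$-version ($q>1$) yields some $\phi \in L^q(\Omega;W^{1,q}_\per(Y;\R^3))$ with $\nabla(m_\e \circ w_\e) \wts \nabla m(x) + \nabla_y \phi(x,y)$ weakly two-scale in $L^q$. Uniqueness of two-scale limits in $L^q$ then identifies $g_0(x,y) = \nabla m(x) + \nabla_y \phi(x,y)$. Since $g_0,\nabla m \in L^2$ (the latter thanks to Step~7 of Proposition \ref{compact}), we obtain $\nabla_y \phi \in L^2$; normalising via $\int_Y \phi(x,y)\,\mathrm{d}y = 0$ and applying Poincaré on $Y$ upgrades $\phi$ to $L^2(\Omega; H^1_\per(Y;\R^3))$. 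The main obstacle I foresee lies in the delicate matching of integrability exponents: the strong $L^\rho$-convergence of $L_\e$ must have sufficient integrability for the product formula to land exactly in the $L^q$-two-scale regime in which the classical gradient two-scale theorem operates, and this equality $\tau=q$ is a consequence of the strengthened growth condition \eqref{grwt-h}, which is thus essential not only for compactness but also for the identification of the microscopic corrector $\phi$.
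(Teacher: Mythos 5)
Your argument is correct, and while it rests on the same two ingredients as the paper's proof --- the chain-rule identity $\nabla m_\e\circ w_\e\,|\det\nabla w_\e|^{1/2}=\nabla(m_\e\circ w_\e)\,(\nabla w_\e)^{-1}|\det\nabla w_\e|^{1/2}$ and the two-scale structure of $\nabla(m_\e\circ w_\e)$ coming from the $W^{1,q}$-bound in Proposition \ref{compact} --- it runs the product in the opposite direction, and this genuinely changes the mechanics. The paper multiplies the $L^q$-bounded gradient $\nabla(m_\e\circ w_\e)$ by $g_\e={\rm adj}(\nabla w_\e)|\det\nabla w_\e|^{-1/2}$, which (being quadratic in $\nabla w_\e$) converges to $\Id$ only in $L^{r^*}$ with $r^*$ close to $\tfrac{ps}{p+2s}$, and therefore needs the truncation $\hat g_\e$, the vanishing-support observation, and an equi-integrability argument (Lemmas \ref{lmm:conv_geps} and \ref{l5.6}) to pass to the limit. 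You instead keep the $L^2$-bounded quantity intact, multiply by $L_\e=\nabla w_\e\,|\det\nabla w_\e|^{-1/2}$, which is only \emph{linear} in $\nabla w_\e$ and hence converges strongly with exponent above $2$; splitting $\int g_\e L_\e\psi(x,\tfrac x\e)\,{\rm d}x$ into $\int g_\e\psi(x,\tfrac x\e)\,{\rm d}x$ plus an error controlled by $\|g_\e\|_{L^2}\|L_\e-\Id\|_{L^2}$ then identifies the $L^2$ two-scale limit with the $L^q$ two-scale limit $\nabla m+\nabla_y\phi$ by uniqueness, and the $L^2$-upgrade of $\phi$ via Poincar\'e--Wirtinger is exactly as needed. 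This is a cleaner route that avoids the truncation machinery entirely; what it buys is purchased by the same hypothesis, namely the uniform $L^s$-bound on $|\det\nabla w_\e|^{-1}$ guaranteed by the choice of $h$ in \ref{H2}.

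One slip to fix: the intermediate claim that $|\det\nabla w_\e|^{-1/2}\to 1$ strongly in $L^{s-\eta}$ does not deliver the exponent you then use. With only $L^{s-\eta}$, H\"older gives $\rho$ near $\bigl(\tfrac1p+\tfrac1s\bigr)^{-1}$, and $\rho\ge 2$ would require $s\ge\tfrac{2p}{p-2}$, which is not assumed. The correct statement --- which is what your final arithmetic actually uses --- is that $|\det\nabla w_\e|^{-1/2}$ is bounded in $L^{2s}$ (since $|\det\nabla w_\e|^{-1}$ is uniformly bounded in $L^s$) and converges to $1$ in $L^1$ because $\bigl||\det\nabla w_\e|^{-1/2}-1\bigr|\le\bigl||\det\nabla w_\e|^{-1}-1\bigr|$, so Lemma \ref{l5.5} yields convergence in $L^{2s-\eta}$; then $\rho$ can indeed be taken arbitrarily close to $\bigl(\tfrac1p+\tfrac1{2s}\bigr)^{-1}>2$, which is equivalent to $s>\tfrac{p}{p-2}$, and in fact strong $L^2$-convergence of $L_\e$ is all the identification step requires.
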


In order to prove this Proposition, we need some preliminary analysis on the quantities we are working with. Now, observe that $\nabla m_\e \in L^2(w_\e(\Omega))$ if and only if $\nabla m_\e \circ w_\e |\det\nabla w_\e|^\frac{1}{2} \in L^2(\Omega)$. Note that
\begin{equation*}
    \nabla (m_\e \circ w_\e) = \nabla m_\e \circ w_\e \, \nabla w_\e,
\end{equation*} 
thus
\begin{equation}
\label{5.14a}
    \nabla m_\e \circ w_\e |\det\nabla w_\e|^\frac{1}{2} = \nabla (m_\e \circ w_\e) (\nabla w_\e)^{-1} |\det\nabla w_\e|^\frac{1}{2}.
\end{equation}
Now for brevity let us denote   
\begin{equation}
\label{def_g_e}
    g_\e := (\nabla w_\e)^{-1} |\det\nabla w_\e|^\frac{1}{2} = {\rm adj}(\nabla w_\e) |\det\nabla w_\e|^{-\frac{1}{2}}.
\end{equation}

\begin{lemma}
\label{lmm:conv_geps}
    Under the assumptions of Proposition \ref{prop:two_scale}, we have
    \begin{equation*}
        g_\e \to \Id \ \textnormal{ in } L^{r^*}(\Omega; \R^{3 \times 3}), \quad \textnormal{ where  } r^* < r := \frac{ps}{p+2s} \in \left(1, \frac{p}{2}\right), 
    \end{equation*}
    and where $s$ is the exponent in \eqref{5.15a}.
    \qed
\end{lemma}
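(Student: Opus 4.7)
My plan is to combine two ingredients: first, pointwise a.e.~convergence $g_\e \to \Id$, which is immediate from the convergences already recorded in Proposition~\ref{compact}; and second, a uniform bound on $(g_\e)_\e$ in $L^{r}(\Omega; \R^{3\times 3})$ with $r = ps/(p+2s) > 1$. Once these are in place, the $L^r$-bound with $r>1$ on the bounded domain $\Omega$ yields equi-integrability of $(|g_\e|)_\e$, so Vitali's theorem upgrades the a.e.~convergence to strong convergence in $L^1$, and Lemma~\ref{l5.5} then pushes the conclusion up to any exponent $r^\ast < r$.

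\textbf{Pointwise convergence and $L^r$-bound.} From Proposition~\ref{compact}, $\nabla w_\e \to \Id$ strongly in $L^p(\Omega;\R^{3\times 3})$ and $\det\nabla w_\e \to 1$ strongly in $L^{p/3}(\Omega)$, so up to a subsequence both convergences hold a.e.\ in $\Omega$. Since the adjugate is polynomial in the entries of $\nabla w_\e$ and $\det\nabla w_\e > 0$ a.e., we obtain $g_\e = \mathrm{adj}(\nabla w_\e)\,|\det\nabla w_\e|^{-1/2} \to \Id$ a.e.\ in $\Omega$. For the $L^r$-bound I use the pointwise inequality $|\mathrm{adj}(A)| \le C|A|^2$, which yields boundedness of $\mathrm{adj}(\nabla w_\e)$ in $L^{p/2}(\Omega;\R^{3\times 3})$; moreover, $|\det\nabla w_\e|^{-1/2}$ is bounded in $L^{2s}(\Omega)$ because Step~3 of Proposition~\ref{compact} supplies a uniform $L^s$-bound on $|\det\nabla w_\e|^{-1}$. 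H\"older's inequality with exponents $p/2$ and $2s$ then produces boundedness of $(g_\e)_\e$ in $L^{q_0}(\Omega;\R^{3\times 3})$ with $1/q_0 = 2/p + 1/(2s)$; since $\Omega$ is bounded and $q_0 = 2ps/(p+4s) > r = ps/(p+2s)$, the claimed $L^r$-bound follows a fortiori.

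\textbf{Conclusion and main obstacle.} The inequality $s > p/(p-2)$ built into \ref{H2} is precisely equivalent to $r > 1$, so the $L^r$-bound renders $(|g_\e|)_\e$ equi-integrable on the bounded domain $\Omega$. Combined with the a.e.~convergence established above, Vitali's convergence theorem then gives $g_\e \to \Id$ strongly in $L^1(\Omega;\R^{3\times 3})$. Lemma~\ref{l5.5} applied to $f_\e := g_\e - \Id$, which vanishes in $L^1$ and is bounded in $L^r$ with $r>1$, finally delivers the desired strong convergence in $L^{r^\ast}(\Omega;\R^{3\times 3})$ for every $r^\ast \in [1,r)$. The only delicate ingredient is the uniform $L^s$-control on $|\det\nabla w_\e|^{-1}$, but this is exactly the content of Step~3 in Proposition~\ref{compact}, where the growth condition~\eqref{grwt-h} combined with the De~la~Vall\'ee--Poussin criterion produces both the equi-integrability and the quantitative $L^s$-bound needed here.
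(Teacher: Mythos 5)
Your proof is correct, and its skeleton matches the paper's: a uniform $L^{r}$-bound on $g_\e$ obtained by H\"older from the $L^{p/2}$-bound on $\mathrm{adj}(\nabla w_\e)$ and the uniform $L^{s}$-bound on $|\det\nabla w_\e|^{-1}$ (the condition $s>p/(p-2)$ giving $r>1$), followed by an upgrade of $L^1$-convergence to $L^{r^*}$-convergence via Lemma \ref{l5.5}. Where you genuinely deviate is the intermediate $L^1$-step: the paper isolates this as Lemma \ref{l5.6} and proves it quantitatively from the splitting $|g_\e-\Id|\le |\mathrm{adj}(\nabla w_\e)-\Id|\,|\det\nabla w_\e|^{-1/2}+\bigl||\det\nabla w_\e|^{-1/2}-1\bigr|$, using the strong $L^{p/2}$-convergence of the adjugate and the strong $L^1$-convergence of $|\det\nabla w_\e|^{-1}$ from Proposition \ref{compact}; you instead pass to an a.e.\ convergent subsequence and invoke Vitali's theorem, the equi-integrability coming for free from the $L^{q_0}$-bound with $q_0=2ps/(p+4s)>r>1$ (your H\"older pairing $p/2$ with $2s$ is in fact slightly sharper than the paper's). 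Your route avoids Lemma \ref{l5.6} and the strong convergence of $|\det\nabla w_\e|^{-1}$ altogether, at the cost of a subsequence extraction; to conclude for the full sequence you should add the standard remark that every subsequence admits a further subsequence with the same limit $\Id$, which is routine here. One small imprecision in your closing comment: the De la Vall\'ee--Poussin step in Step 3 of Proposition \ref{compact} yields equi-integrability, while the uniform $L^s$-bound on $|\det\nabla w_\e|^{-1}$ comes directly from the energy bound together with the specific choice $h(t)=t^{-s}$ in \ref{H2}; this is the same ingredient the paper's own proof uses, so it is an attribution slip rather than a gap.
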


In order to prove this result, we want to exploit Lemma \ref{l5.5} and we need the following auxiliary result.

\begin{lemma}
\label{l5.6}
    Under the assumptions of Proposition \ref{prop:two_scale}, we have
    \begin{equation*}
        g_\e \to \Id \ \textnormal{ in } \ L^1(\Omega; \R^{3 \times 3}).
    \end{equation*} 
\end{lemma}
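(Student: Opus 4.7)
The plan is to obtain the strong $L^1$ convergence via the algebraic decomposition
\[
g_\e - \Id = \bigl[\textnormal{adj}(\nabla w_\e) - \Id\bigr]\,|\det \nabla w_\e|^{-1/2} + \bigl[|\det \nabla w_\e|^{-1/2} - 1\bigr]\,\Id,
\]
treating each summand separately. Both will be shown to vanish in $L^1(\Omega; \R^{3 \times 3})$, with the main subtlety being the pairing of the low-integrability factor $|\det \nabla w_\e|^{-1/2}$ with the $L^{p/2}$-smallness of the adjugate difference.

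For the second summand I would exploit the elementary pointwise inequality
\[
\bigl||\det \nabla w_\e|^{-1/2} - 1\bigr| \le \bigl||\det \nabla w_\e|^{-1} - 1\bigr|,
\]
which follows from writing $a - 1 = (a^2 - 1)/(a + 1)$ with $a := |\det \nabla w_\e|^{-1/2} \ge 0$ and observing that $a + 1 \ge 1$. Step 3 of Proposition \ref{compact} then immediately yields the $L^1$ convergence to zero.

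For the first summand I would apply H\"older's inequality with the conjugate exponents $p/2$ and $p/(p-2)$. On the one hand, since the adjugate of a $3 \times 3$ matrix is a polynomial of degree two in the entries, one has the pointwise estimate $|\textnormal{adj}(A) - \textnormal{adj}(B)| \le C(|A| + |B|)\,|A - B|$; combined with the Cauchy--Schwarz inequality and with $\nabla w_\e \to \Id$ strongly in $L^p(\Omega; \R^{3\times 3})$ from Step 1 of Proposition \ref{compact}, this delivers $\textnormal{adj}(\nabla w_\e) - \Id \to 0$ strongly in $L^{p/2}(\Omega; \R^{3 \times 3})$. On the other hand, the factor $|\det \nabla w_\e|^{-1/2}$ is uniformly bounded in $L^{p/(p-2)}(\Omega)$: this amounts to uniform boundedness of $|\det \nabla w_\e|^{-1}$ in $L^{p/(2(p-2))}(\Omega)$, which is implied by \eqref{5.15a} together with the strict inequality $s > p/(p-2) > p/(2(p-2))$. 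The product then converges to $0$ in $L^1(\Omega; \R^{3 \times 3})$.

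The main obstacle, and the reason the strengthened growth condition \eqref{grwt-h} in assumption \ref{H2} is needed, is that the singular factor $|\det \nabla w_\e|^{-1/2}$ is only available in a Lebesgue space of finite integrability. The uniform $L^s$-bound on $|\det \nabla w_\e|^{-1}$ with $s > p/(p-2)$, established in Step 3 of Proposition \ref{compact}, is precisely what is required to make the H\"older pairing above close and to absorb the degeneracy against the $L^p$-convergence of $\nabla w_\e$; a weaker growth condition on $h$ would leave only almost everywhere convergence of $g_\e$ and force a more delicate equi-integrability argument.
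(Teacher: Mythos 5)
Your proposal is correct and follows essentially the same route as the paper: the same splitting of $g_\e - \Id$ into the adjugate part and the determinant part, the same pointwise bound $\bigl||\det\nabla w_\e|^{-1/2}-1\bigr|\le\bigl||\det\nabla w_\e|^{-1}-1\bigr|$ handled by Step 3 of Proposition \ref{compact}, and the same H\"older pairing of $\textnormal{adj}(\nabla w_\e)-\Id$ in $L^{p/2}$ against $|\det\nabla w_\e|^{-1/2}$ in $L^{p/(p-2)}$ (the paper's remark after the lemma confirms that only $|\det\nabla w_\e|^{-1}\in L^{p/(2(p-2))}$ is needed here). The only nuance is that the uniformity in $\e$ of the $L^{s}$-bound on $|\det\nabla w_\e|^{-1}$ comes from the energy bound combined with the choice $h(t)=t^{-s}$ in \ref{H2} rather than from the mere membership statement \eqref{5.15a} or from the equi-integrability in Step 3, but this is exactly the mechanism the paper itself invokes, so your argument is sound.
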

\begin{proof}
    Note that
    \begin{equation}
    \label{5.16}
        |g_\e - \Id| \le |{\rm adj}(\nabla w_\e) - \Id| |\det\nabla w_\e|^{-\frac{1}{2}} + \left||\det\nabla w_\e|^{-\frac{1}{2}} - 1\right|,
    \end{equation}
    and that
    \begin{equation*}
        \left| |\det\nabla w_\e|^{-\frac{1}{2}} - 1 \right| \le \left| |\det\nabla w_\e|^{-1} - 1 \right|.
    \end{equation*}
    We know from Proposition \ref{compact} that, under the hypotheses of the Lemma and under the assumption that $\underset{\e>0}{\sup} \, F_\e(u_\e, m_\e) < +\infty$, $w_\e \to id$ in $W^{1,p}(\Omega; \R^3)$, which immediately implies that
    \begin{equation}
    \label{5.16a}
        {\rm adj}(\nabla w_\e) \to \Id \ \textnormal{ in } L^\frac{p}{2}(\Omega; \R^{3 \times 3}).
    \end{equation}
    Then the claim follows by Proposition \ref{compact} by integrating \eqref{5.16} and applying H\"older's inequality.
\end{proof}

\begin{remark}
    Note that, in order to get $g_\e\in L^1(\Omega; \R^{3 \times 3})$, it would be sufficient to assume $|\det\nabla w_\e|^{-1} \in L^{\frac{p}{2(p-2)}}(\Omega)$ instead of \eqref{5.15a}.
\end{remark}

Now we are ready to give the proof of Lemma \ref{lmm:conv_geps}.

\begin{proof}[Proof of Lemma \ref{lmm:conv_geps}]
    We now want to prove that $g_\e\in L^r(\Omega; \R^{3 \times 3})$ for some $r > 1$. Indeed, applying the generalized H\"older's inequality and using \eqref{5.15a} we get
    \begin{equation*}
    \begin{aligned}
        \|g_\e\|_{L^r(\Omega; \R^{3 \times 3})}
        &\le \||{\rm adj}(\nabla w_\e)|\|_{L^\frac{p}{2}(\Omega)} \||\det\nabla w_\e|^{-1}\|_{L^{\frac{pr}{p-2r}}(\Omega)} \\
        &= \||{\rm adj}(\nabla w_\e)|\|_{L^\frac{p}{2}(\Omega)} \||\det\nabla w_\e|^{-1}\|_{L^s(\Omega)} < +\infty,
    \end{aligned}
    \end{equation*}
    where we chose $r := \tfrac{ps}{p+2s} \in \left(1, \tfrac{p}{2}\right)$. Now, since we are under the hypotheses of Proposition \ref{compact}, \eqref{5.16a} holds. This entails that we can apply Lemma \ref{l5.6}, which together with Lemma \ref{l5.5} implies the thesis.
\end{proof}
		
We are now ready to give the proof of Proposition \ref{prop:two_scale}.

\begin{proof}[Proof of Proposition \ref{prop:two_scale}]
    Let us first define $\hat{g}_\e$ by components as
    \begin{equation*}
        (\hat{g}_\e)_{ij}(x) := \min\{|(g_\e)_{ij}(x)|,\, 2\} \, {\rm sgn} (g_\e)_{ij}(x),
    \end{equation*}
    where $g_\e$ is defined as in \eqref{def_g_e}. Clearly, $|\hat{g}_\e - \Id| \le |g_\e - \Id|$. By Lemmas \ref{l5.5} and \ref{l5.6} we get that 
    \begin{equation}
    \label{5.15}
        \hat{g}_\e \to \Id \ \textnormal{ in } \ L^\sigma(\Omega; \R^{3 \times 3}) \ \textnormal{ for any } \sigma \in \left[ 1, +\infty \right).
    \end{equation}
    Further, again from Lemma \ref{l5.6}, it follows that $|\supp (g_\e - \hat{g}_\e)| \to 0$ as $\e \to 0$. Then, thanks to \eqref{5.14a} and \eqref{def_g_e}, we have that
    \begin{equation*}
    \begin{aligned}
        |\nabla (m_\e \circ w_\e) (g_\e - \hat{g}_\e)|
        &\le |\nabla (m_\e \circ w_\e) g_\e| + |\nabla (m_\e \circ w_\e) \hat{g}_\e| \\
        &\le |\nabla (m_\e \circ w_\e) g_\e| + C|\nabla (m_\e \circ w_\e)| \\
        &= \left|\nabla m_\e \circ w_\e |\det\nabla w_\e|^\frac{1}{2}  \right| + C|\nabla (m_\e \circ w_\e)|
    \end{aligned}
    \end{equation*}
    which, together with the fact that the two terms on the right hand side of the latter are bounded in $L^2(\Omega)$ and $L^q(\Omega)$ respectively, with $q$ as in Proposition \ref{compact}, entails that, for any test function $\psi\in C(\overline{\Omega\times Y})$, the sequence $\left\{\nabla (m_\e \circ w_\e) (g_\e - \hat{g}_\e) \, \psi\left(x,\frac{x}{\e}\right)\right\}$ is equi-integrable. This remark, combined with the vanishing support of $\{g_{\e}-\hat{g}_e\}$ yields
    \begin{equation}
    \label{5.20}
        \int_\Omega \nabla (m_\e \circ w_\e) (g_\e - \hat{g}_\e) \, \psi\left(x,\frac{x}{\e}\right) \dx \to 0 \quad \textnormal{ as } \ \e \to 0
    \end{equation} 
    for every $\psi\in C(\overline{\Omega\times Y})$. By \eqref{5.15}, for $\psi$ as before, one has
    \begin{equation*}
        \hat{g}_\e \psi\left(x,\frac{x}{\e}\right) \sts \psi(x,y) \quad \textnormal{ strongly two-scale in } L^\sigma(\Omega \times Y; \R^{3 \times 3}) \textnormal{ for any } \sigma \in \left[1, +\infty\right).
    \end{equation*} 
    Then, by properties of two-scale convergence, 
    \begin{equation*}
        \int_\Omega \nabla (m_\e \circ w_\e)   \hat{g}_\e \, \psi\left(x,\frac{x}{\e}\right) \dx \to \int_{\Omega}\int_{Y} (\nabla m(x) + \nabla_y \phi(x,y)) \psi(x,y) \dx. 
    \end{equation*} 
    This, together with \eqref{5.14a} and \eqref{5.20}, yields \eqref{5.10}.
\end{proof}



\section{Proof of Theorem \ref{main_thm}}
\label{gamma-conv}
This section is devoted to the proof of Theorem \ref{main_thm}. In particular, in Subsection \ref{lim_magnetostatic} we deal with the magnetostatic self-energy term, while in Subsection \ref{low-bou} we prove the lower bound for our family of energy functionals. In Subsection \ref{conv_rec_seq} we then construct the recovery sequences for the energies and we prove their convergences, showing that they satisfy the ones in Proposition \ref{compact}, while lastly in Subsection \ref{rec_seq} we show that the limiting functional is recovered through these sequences and we finally prove Theorem \ref{main_thm}.


\subsection{Limit of the magnetostatic self-energy term}
\label{lim_magnetostatic}

Regarding the magnetostatic self-energy term of the energy functional $F_\e$ given by \eqref{resc-en}, we show something more than the expected lower bound, in particular that
\begin{equation}
\label{eq_magnetostatic}
    \lim_{\e \to 0} \frac{\mu_0}{2} \int_{\R^3} |\nabla \psi_{m_\e}|^2 \dx = \frac{\mu_0}{ 2} \int_{\R^3} |\nabla \psi_m| \dx.
\end{equation}
Let $(u_\e, m_\e) \in \A_\e$ be such that $w_\e = id + \ealpha u_\e \in \Y$ and $\sup_{\e>0} F_\e(u_\e, m_\e) < +\infty$. Given $\chi_{\omega_\e(\Omega)} m_\e \in L^2(\R^3;\R^3)$, thanks to Lemma \ref{lmm:sol_magneto_self} we know that there exists $\psi_{m_\e} \in L^{1,2}(\R^3)$ such that it is the unique solution, up to additive constants, to 
\begin{equation*}
    \int_{\R^3} \chi_{\omega_\e(\Omega)} m_\e \cdot \nabla \varphi \dx = \int_{\R^3} \nabla\psi_{m_\e} \cdot \nabla \varphi \dx \quad \forall \, \varphi \in L^{1,2}(\R^3),
\end{equation*}
and such that
\begin{equation}
\label{aprioriestimate}
    \|\psi_{m_\e}\|_{L^{1,2}(\R^3)} = \|\nabla \psi_{m_\e}\|_{L^2(\R^3 \times \R^3)} \le \|\chi_{\omega_\e(\Omega)} m_\e\|_{L^2(\R^3)}.
\end{equation}
By Proposition \ref{compact}, we have that $\chi_{\omega_\e(\Omega)}m_\e$ is bounded in $L^2(\R^3;\R^3)$ and this, along with \eqref{aprioriestimate}, yields 
\begin{equation*}
    \nabla \psi_{m_\e} \rightharpoonup \nabla \psi_m \quad \textnormal{ weakly in } L^2(\R^3; \R^3),
\end{equation*}
where $\psi_m \in L^{1,2}(\R^3)$ is the unique solution, up to additive constants, to 
 \begin{equation*}
    \int_{\R^3} \chi_\Omega m \cdot \nabla\varphi \dx = \int_{\R^3} \nabla\psi_m \cdot \nabla\varphi \dx \quad \forall \, \varphi\in L^{1,2}(\R^3).
\end{equation*}
Now, thanks to the linearity of the Maxwell equation \eqref{maxwell}, $\psi_{m_\e}-\psi_{m}$ is the unique solution, again up to additive constants, to
\begin{equation*}
    \int_{\R^3} (\chi_{\omega_\e(\Omega)} m_\e - \chi_{\Omega} m) \cdot \nabla\varphi \dx = \int_{\R^3} (\nabla\psi_{m_\e} - \nabla\psi_m) \cdot \nabla\varphi \dx \quad \forall \, \varphi\in L^{1,2}(\R^3).
\end{equation*}
Note that, again by Lemma \ref{lmm:sol_magneto_self}, it also holds that
\begin{equation*}
    \|\nabla \psi_{m_\e} - \nabla \psi_m  \|_{L^2(\R^3;\R^3)} \le \|\chi_{\omega_\e(\Omega)} m_\e - \chi_\Omega m \|_{L^2(\R^3;\R^3)},
\end{equation*}
which in turn implies, thanks to Proposition \ref{compact}, that $\nabla \psi_{m_\e}$ strongly converges in $L^2(\R^3;\R^3)$ to $\nabla \psi_m$. Therefore \eqref{eq_magnetostatic} holds.


\subsection{Lower bound}
\label{low-bou}

In order to deal with the lower bound for our family of energy functionals, let us firstly remark, by the superadditivity of the $\liminf$, that
\begin{equation*}
\begin{aligned}
    \liminf_{\e \to 0} F_\e (u_\e, m_\e)
    \ge & \, \liminf_{\e\to 0} \frac{1}{\e^{2\alpha}} \int_\Omega W \left(\frac{x}{\e}, \Id + \e^\alpha \nabla u_\e(x), m_\e(x + \e^\alpha u(x)) \right) \dx \\
    + & \, \liminf_{\e \to 0} \int_\Omega a \left(\frac{x}{\e}\right) |\nabla m_\e(x + \e^\alpha u_\e(x))|^2 |\det\nabla w_\e| \dx \\
    + & \, \liminf_{\e \to 0} \, \frac{\mu_0}{2} \int_{\R^3} |\nabla \psi_{m_\e}|^2 \dx.
\end{aligned}
\end{equation*}
We already dealt with the magnetostatic self-energy term of the energy, see Subsection \ref{lim_magnetostatic}, therefore it is enough to compute separately the lower bounds of each of the two remaining terms. Let $(u_\e, m_\e) \in \A_\e$ be such that $w_\e = id + \e^\alpha u_\e \in \Y$. From this point on, we can assume $\sup_{\e > 0} F_\e(u_\e, m_\e) < +\infty$, otherwise the result is obvious, where the energy $F_\e$ is defined as in \eqref{resc-en}.

\smallskip
{\bf{Step 1: Lower bound for the elastic term.}}
Regarding the elastic term of the energy functional, we show that
\begin{equation*}
    \liminf_{\e \to 0} \frac{1}{\e^{2\alpha}} \int_\Omega W \left(\frac{x}{\e}, \Id + \e^\alpha \nabla u_\e(x), m_\e(x + \e^\alpha u_\e(x)) \right) \dx \ge \frac{1}{2} \int_\Omega Q_\hom(\nabla u(x), m(x)) \dx,
\end{equation*}
with $Q_\hom$ being defined as in \eqref{Qhom}. To this end, we adapt the strategy used in \cite[Theorem 5.2.1]{N10}, using the notion of two--scale convergence, cf. \cite{A92, Ng89}.
\\
By Proposition \ref{compact} we have $u_\e \rightharpoonup u$ weakly in $W^{1,p}(\Omega; \R^3)$, so in particular weakly in $W^{1,2}(\Omega; \R^3)$ and $m_\e \circ w_\e \rightharpoonup m$ weakly in $W^{1,q}(\Omega; \R^3)$ with $m \in W^{1,2q}(\Omega; \s^2)$ and where $q := \frac{2ps}{p+s(p+2)} \in (1, \tfrac{2p}{p+2})$ with $s$ the exponent in \eqref{5.15a}. Note that $1 < q < 2$ and in particular  $m \in W^{1,2}(\Omega; \s^2)$. Thanks to \cite[Proposition 1.14]{A92}, see also \cite[Theorem 9.9]{cioranescu-donato}, and to \cite[Theorem 9]{LNW02}, we get, up to subsequences,
\begin{alignat}{3}
    u_\e &\wts u && \quad \textnormal{ weakly two-scale in } L^2(\Omega; \R^3), 
    \nonumber \\
    \nabla u_\e & \wts \nabla u + \nabla_y u_1(x, y) && \quad \textnormal{ weakly two-scale in } L^2(\Omega \times Y; \R^{3 \times 3})
    \label{2snablaue} 
\end{alignat}
for some $u_1 \in L^2(\Omega; H^1_\per(Y; \R^3))$. For every $\e > 0$, we define the set
\begin{equation*}
    E_\e := \left\{ x \in \Omega \ : \ |\nabla u_\e(x)| \le \e^{-\frac{\alpha}{2}} \right\}.
\end{equation*}
Due to \eqref{2snablaue} and the weak convergence $u_\e \rightharpoonup u$ in $W^{1,2}(\Omega; \R^3)$, granting us uniform boundedness of the gradients, we can apply Lemma \ref{lmm-neuk} obtaining
\begin{equation}
\label{2scalechigrad}
    \chi_{E_\e} \nabla u_\e \wts \nabla u + \nabla_y u_1(x, y) \quad \textnormal{ weakly two-scale in } L^2(\Omega \times Y; \R^{3 \times 3}).
\end{equation}
Thanks to the non-negativity of the energy density $W$, along with the fact that $W$ has a minimizer at $\Id$ by \ref{H4}, we get
\begin{equation}
\label{est-1-lb}
\begin{aligned}
    \frac{1}{\e^{2\alpha}} &\int_\Omega W\left(\frac{x}{\e}, \Id + \e^\alpha \nabla u_\e, m_\e(x + \e^\alpha u_\e(x)) \right) \dx \\
    \quad \ge \frac{1}{\e^{2\alpha}} &\int_\Omega W \left(\frac{x}{\e}, \Id + \e^\alpha \chi_{E_\e} \nabla u_\e, m_\e(x + \e^\alpha u_\e(x)) \right) \dx.
\end{aligned}
\end{equation}
Recall that by \ref{H5} we have
\begin{equation}
\label{newH5}
    \left|W(y, \Id + F, \nu) - \frac{1}{2} Q(y, F, \nu)\right| \le \tilde{r}(|F|)|F|^2,
\end{equation}
for any $F \in \R^{3 \times 3}$, for any $\nu \in \s^2$ and for a.e. $y \in \R^3$, for a continuous function $\tilde{r}$. Furthermore, we have that 
\begin{equation*}
    \| \e^\alpha \chi_{E_\e} \nabla u_\e \|_{L^\infty(\Omega; \R^{3 \times 3})} \le \e^\frac{\alpha}{2}.
\end{equation*}
This, along with \eqref{newH5}, implies that for small enough $\e$ we can approximate $W$ by the quadratic form $Q$, in particular
\begin{equation}
\label{quadratic+reima}
\begin{aligned}
    & \ \frac{1}{\e^{2\alpha}} \int_\Omega W\left(\frac{x}{\e}, \Id + \e^\alpha \chi_{E_\e} \nabla u_\e, m_\e(x + \e^\alpha u_\e(x)) \right) \dx \\
    \ge & \ \frac{1}{2\e^{2\alpha}} \int_\Omega Q \left(\frac{x}{\e}, \e^\alpha \chi_{E_\e} \nabla u_\e, m_\e(x + \e^\alpha u_\e(x)) \right) \dx - r(\e^\frac{\alpha}{2}) \|\chi_{E_\e} \nabla u_\e \|^2_{L^2(\Omega; \R^{3 \times 3})} \\
    = & \ \frac{1}{2} \int_\Omega Q \left(\frac{x}{\e}, \chi_{E_\e} \nabla u_\e, m_\e(x + \e^\alpha u_\e(x)) \right) \dx - r(\e^\frac{\alpha}{2}) \|\chi_{E_\e} \nabla u_\e \|^2_{L^2(\Omega; \R^{3 \times 3})}.
\end{aligned}
\end{equation}
The second term on the right-hand side of \eqref{quadratic+reima} goes to $0$ as $\e \to 0$ due to the boundedness of $\|\chi_{E_\e} \nabla u_\e \|^2_{L^2(\Omega; \R^{3 \times 3})}$. Recall that quadratic forms are lower semicontinuous with respect to the two-scale convergence, see \cite[Proposition 3.2.2]{N10} for example. Therefore, thanks to \eqref{est-1-lb} and \eqref{quadratic+reima}, and in view of \eqref{2scalechigrad} and the strong convergence of $m_\e \circ w_\e$, see Proposition \ref{compact}, we deduce that
\begin{equation*}
\begin{aligned}
    & \ \liminf_{\e \to 0} \frac{1}{\e^{2\alpha}} \int_\Omega W \left(\frac{x}{\e}, \Id + \e^\alpha \nabla u_\e, m_\e(x + \e^\alpha u_\e(x)) \right) \dx \\
    \ge & \ \liminf_{\e \to 0} \frac{1}{2} \int_\Omega Q \left(\frac{x}{\e}, \chi_{E_\e} \nabla u_\e, m_\e(x + \e^\alpha u_\e(x)) \right) \dx \\
    \ge & \ \frac{1}{2} \iint_{\Omega \times Y} Q \left(y, \nabla u(x) + \nabla_y u_1(x,y), m(x) \right) \dx \dif y \\
    \ge & \ \frac{1}{2} \int_\Omega \inf \left\{ \int_Y Q \left(y, \nabla u(x) + \nabla_y \phi(y), m(x) \right) \dif y \ : \ \phi\in H^1_\per (Y; \R^3) \right\} \dx \\
    = & \ \frac{1}{2} \int_\Omega Q_\hom (\nabla u(x), m(x)) \dx,
\end{aligned}
\end{equation*}
as desired.

\smallskip
{\bf{Step 2: Lower bound for the exchange energy term.}}
Regarding the exchange term of the energy functional, we show that
\begin{equation*}
    \liminf_{\e \to 0} \int_\Omega a\left(\frac{x}{\e}\right) |\nabla m_\e(x + \e^\alpha u_\e(x))|^2 |\det \nabla w_\e| \dx \ge \int_\Omega T_\hom(m(x), \nabla m(x)) \dx,
\end{equation*}
with $T_\hom$ being defined as in \eqref{Thom}. To this end, we adapt the strategy used in \cite[Theorem 2.2]{DDF20}, again using the notion of two--scale convergence.

Chosen $\psi\in C^\infty(\Omega; H^1_\per(Y; \R^3))$, we deduce that
\begin{equation*}
\begin{aligned}
    &\int_\Omega a\left(\frac{x}{\e}\right) |\nabla m_\e \circ w_\e|^2 |\det \nabla w_\e| \dx \\
    \ge 2 &\int_\Omega \int_Y a\left(\frac{x}{\e}\right) \left[\nabla m_\e \circ w_\e (\det\nabla w_\e)^\frac{1}{2} \right] : \left[\nabla m(x) + \nabla_y \psi(x,y) \right] \dx\dif y \\
    - &\int_\Omega \int_Y a\left(\frac{x}{\e}\right) |\nabla m(x) + \nabla_y \psi(x,y)|^2 \dx\dif y.
\end{aligned}
\end{equation*}
Here, we have used the fact that the difference of the integrand on the left-hand side and the one on the right-hand side is a perfect square. Now, thanks to Proposition \ref{prop:two_scale} we can pass to the two-scale limit, yielding
\begin{equation*}
\begin{aligned}
    \liminf_{\e \to 0} &\int_\Omega a\left(\frac{x}{\e}\right) |\nabla m_\e \circ w_\e|^2 |\det \nabla w_\e| \dx \\
    \ge 2 &\int_\Omega \int_Y a(y) [\nabla m(x) +  \nabla_y\phi(x,y)] : [\nabla m(x) +  \nabla_y \psi(x,y)] \dx\dif y \\
    - &\int_\Omega \int_Y a(y) |\nabla m(x) +  \nabla_y \psi(x,y)|^2 \dx\dif y.
\end{aligned}
\end{equation*}
By density, there exists a sequence $(\psi_n)_n \subset C^\infty(\Omega; H^1_\per(Y; \R^3))$ such that $\psi_n \to \phi$ strongly in $L^2(\Omega; H^1_\per(Y; \R^3))$, where $\phi$ as in \eqref{5.10}. Hence, passing to the limit as $n \to +\infty$, we get
\begin{equation*}
\begin{aligned}
    \liminf_{\e \to 0} &\int_\Omega a\left(\frac{x}{\e}\right) |\nabla m_\e \circ w_\e|^2 |\det \nabla w_\e| \dx \\
    \ge &\int_\Omega \int_Y a(y)|\nabla m(x) +  \nabla_y\phi(x,y)|^2 \dx\dif y \\
    \ge &\int_\Omega \inf\left\{\int_Y a(y) |\nabla m(x) + \Xi(y)|^2 \dif y \ : \ \Xi \in  H^1_\per(Y; \R^3) \right\} \dx,
\end{aligned}
\end{equation*}
concluding the proof.


\subsection{Recovery sequences and compactness}
\label{conv_rec_seq}

In this section, we are going to construct a recovery sequence and to prove the necessary convergences needed to prove the optimality of the upper bound. To this end, let $u \in W^{1,p}_g(\Omega; \R^3)\cap  W^{2,\infty}(\Omega;\R^3)$ and $m \in H^1(\Omega; \s^2)$. Moreover, partly following the strategy of \cite[Theorem 2.3]{DDF20}, let us define $\pi$ as the pointwise projection operator from $U_\delta$ to $\s^2$, where $U_\delta$ is the tubular neighborhood of size $\delta$ of the sphere $\s^2$. Note that it is possible to choose $\delta$ small enough so that $\pi \in C^1(\overline{U}_\delta; \s^2)$. For any $\e > 0$ and a.e. $x \in \Omega$, let us consider the sequences 
\begin{equation}
\label{def:recoveryseq}
\begin{aligned}
    u_\e(x) &:= u(x) + \e \, \psi \left( \frac{x}{\e} \right), \\
    w_\e(x) &:= x + \e^\alpha u_\e(x), \\
    \widehat{m}_\e(w_\e(x)) & := m(x) + \e \, \varphi \left(x, \frac{x}{\e}\right) \\
    m_\e(w_\e(x)) & := \pi[\widehat{m}_\e(w_\e(x))],
\end{aligned}
\end{equation}
where $\psi \in C^\infty_\per(Y; \R^3)$ and $\varphi \in C^\infty(\overline{\Omega}, W^{1,\infty}_\per(Y; \R^3))$. We want to prove that
\begin{alignat}{3}
    w_\e &\to id &&\qquad \textnormal{strongly in } W^{1,p}(\Omega; \R^3),
    \nonumber \\
    u_\e &\rightharpoonup u &&\qquad \textnormal{weakly in } W^{1,p}(\Omega; \R^3),
    \nonumber \\
    \det\nabla w_\e &\to 1 &&\qquad \textnormal{strongly in } L^\frac{p}{3}(\Omega),
    \label{rs_conv_det} \\
    |\det\nabla w_\e|^{-1} &\to 1 &&\qquad \textnormal{strongly in } L^1(\Omega),
    \label{rs_conv_det_inv} \\   
    m_\e \circ w_\e &\rightharpoonup m  &&\qquad \textnormal{weakly in } W^{1,q}(\Omega; \R^3),
    \label{rs_conv_mw} \\
    m_\e \circ w_\e &\to m  &&\qquad \textnormal{strongly in } L^r(\Omega; \R^3) \quad \textnormal{for any } r \in [1,+\infty),
    \label{rs_conv_mw_strong} \\
    \chi_{w_\e(\Omega)} m_\e &\to \chi_\Omega m &&\qquad \textnormal{strongly in } L^r(\R^3; \R^3) \quad \textnormal{for any } r \in [1,+\infty),
    \label{rs_conv_chim} \\
    \chi_{w_\e(\Omega)} \nabla m_\e &\rightharpoonup \chi_\Omega \nabla m &&\qquad \textnormal{weakly in } L^2(\R^3; \R^{3 \times 3}),
    \label{rs_conv_chi_nablam}
\end{alignat}
where $q := \frac{2ps}{p+s(p+2)} \in (1, \tfrac{2p}{p+2})$, and where $s$ is the exponent in \eqref{5.15a}. Moreover, we need to prove that
\begin{equation}
\label{2sc_limsup}
    \nabla m_\e \circ w_\e |\det\nabla w_\e|^\frac{1}{2} \wts \nabla m(x) + \nabla_y \phi(x,y) \quad \textnormal{ weakly two-scale in } L^2(\Omega \times Y; \R^3),
\end{equation}
where $\phi \in L^2(\Omega; H^1_\per(Y; \R^3))$ is the minimum of the homogenized problem in \eqref{Thom} when integrated over $\Omega$. Note that by Lax-Milgram’s Lemma, such a minimum $\phi$ exists and is unique. It is easy to see that $(u_\e)_\e \subset W^{1,p}_g(\Omega;\R^3)$, and so also $(w_\e)_\e \subset W^{1,p}(\Omega;\R^3)$, implying the first two convergences (in particular with the second one being strong as well).

Regarding the determinants, proceeding as in Step 2 of the proof of Proposition \ref{compact}, we get
\begin{equation*}
\begin{aligned}
    \int_\Omega |\det(\nabla w_\e)-1|^\frac{p}{3} \dx \le \int_\Omega |\e^\alpha\nabla u_\e|^p \dx + 3 \int_\Omega |\e^\alpha\nabla u_\e|^\frac{p}{3} \dx + 6 \int_\Omega |\e^\alpha\nabla u_\e|^\frac{2p}{3}\dx.
\end{aligned}
\end{equation*}
Since we know already that $(u_\e)_\e \subset W^{1,p}_g(\Omega;\R^3)$, then taking the limit as $\e \to 0$, we get convergence \eqref{rs_conv_det}.

Note that we need to show that $w_\e \in \mathcal{Y}$, see \eqref{eq:class-Y}. Thanks to Theorem \ref{thm:injective} we get the injectivity of $w_\e$ and the sign condition $\det \nabla w_\e > 0$ thanks to the fact that $\e \|\nabla u \|_\infty \le c(\Omega)$. Following the lines of \cite[Section 4]{AKM24}, it also follows that $w_\e$ is bi-Lipschitz for small enough $\e$. In particular, 
\begin{equation}
\label{eq:infdet}
    \inf_{x\in\Omega} \det \nabla w_\e := l_\e > l > 0.
\end{equation}
The fact that $w_\e$ is bi-Lipschitz holds that \eqref{5.15a} is satisfied. Moreover, setting $B_\e^\delta$ as in Step 3 of the proof of Proposition \ref{compact}, namely \eqref{B_eps_d}, we have that
\begin{equation*}
    \int_{B_\e^\delta} |(\det\nabla w_\e)^{-1}-1| \dx = \int_{B_\e^\delta} \frac{|1-\det\nabla w_\e|}{|\det\nabla w_\e|} \dx \le C |B_\e^\delta| \to 0.
\end{equation*}
Note that due to the convergence of $\det\nabla w_\e$ as well as \eqref{eq:infdet}, the constant $C$ does not depend on $\e$. Therefore, convergence \eqref{rs_conv_det_inv} follows as in Step 3 of of the proof of Proposition \ref{compact}.

By the definition of the sequences in \eqref{def:recoveryseq}, it follows immediately that
\begin{alignat}{3}
    m_\e(w_\e) &\to m = \pi[m] &&\qquad \textnormal { strongly in } L^2(\Omega; \R^3),
    \label{eq:strongconhatme} \\
    \nabla \widehat{m}_\e(w_\e) &\sts \nabla m + \nabla_y \varphi(\cdot, y) &&\qquad \textnormal{ strongly two-scale in } L^2(\Omega\times Q; \R^{3 \times 3}).
    \nonumber
\end{alignat} 
Thanks to the first convergence and the fact that $|m_\e \circ w_\e| = 1$ for any $\e > 0$ and a.e. $x \in \Omega$, see \eqref{def:recoveryseq}, we get \eqref{rs_conv_mw_strong} exploiting Lemma \ref{l5.5}. As for convergence \eqref{rs_conv_chim} and \eqref{rs_conv_chi_nablam}, we can proceed as in Steps 6 and 7 of the proof of Proposition \ref{compact}.
\\
Now, bearing in mind the regularity of $\varphi$, it follows that for $\e$ small enough $\widehat{m}_\e \circ w_\e(x) \in U_\delta$ for a.e. $x \in \Omega$. Moreover, the regularity of the projection $\pi$ implies that there exists a constant $C > 0$ such that 
\begin{equation*}
    |\nabla m_\e(w_\e)| \le C |\nabla \widehat{m}_\e(w_\e)| \qquad \textnormal{ a.e. in } \Omega.
\end{equation*}
In view of \eqref{eq:strongconhatme} as well as the boundedness of $(\nabla \widehat{m}_\e(w_\e))_\e$ in $L^2(\Omega; \R^{3 \times 3})$, the following convergences, up to a subsequence, hold:
\begin{alignat}{3}
    \widehat{m}_\e(w_\e) &\rightharpoonup m &&\qquad \textnormal{ weakly in } H^1(\Omega; \s^2),
    \label{eq:weakconvhatwe} \\
    m_\e(w_\e) &\rightharpoonup m &&\qquad \textnormal{ weakly in } H^1(\Omega; \s^2).
    \nonumber
\end{alignat}
Note that the second convergence implies directly \eqref{rs_conv_mw}, since $q < 2$. 
Moreover, it is possible to prove that
\begin{equation}
\label{conv_con_par}
    \nabla m_\e(w_\e) (\det(\nabla w_\e))^\frac{1}{2} \wts (\nabla m + \nabla_y \varphi) \nabla \pi[m] \quad \textnormal{ weakly two-scale in } L^2(\Omega \times Y; \R^{3 \times 3}).
\end{equation}
Indeed, a direct computation shows that for a.e. $x \in \Omega$,
\begin{equation*}
\begin{aligned}
    &\nabla m_\e(w_\e(x)) (\det(\nabla w_\e))^\frac{1}{2} \\
    = &\left[\nabla m(x) + \e \nabla_x \varphi \left(x, \frac{x}{\e} \right) + \nabla_y \varphi \left(x, \frac{x}{\e} \right) \right] \nabla \pi[\widehat{m}_\e(w_\e(x))] (\det(\nabla w_\e))^\frac{1}{2}.
\end{aligned}
\end{equation*}
Deploying the regularity of $\varphi$, it easily follows that we have the following convergences
\begin{alignat*}{3}
    \e \nabla_x \varphi \left(x, \frac{x}{\e}\right) &\to 0 &&\quad \textnormal{ strongly in } L^\infty(\Omega; \R^{3 \times 3}), \\
    \nabla_y \varphi\left(x, \frac{x}{\e} \right) &\sts \nabla_y \varphi &&\quad \textnormal{ strongly two-scale in } L^2(\Omega \times Y; \R^{3 \times 3}).
\end{alignat*}
Now, we show that 
\begin{equation}
\label{conv_aux}
    \nabla \pi[\widehat{m}_\e(w_\e(x))] (\det(\nabla w_\e))^\frac{1}{2} \to \nabla\pi[m] \quad \textnormal{ strongly in } L^2(\Omega; \R^{3 \times 3}).
\end{equation}
Indeed, 
\begin{equation}
\label{eq:convergenceofgrad}
\begin{aligned}
    & \ \| \nabla \pi[\widehat{m}_\e(w_\e(x))] (\det(\nabla w_\e))^\frac{1}{2} - \nabla\pi[m] \|_{L^2(\Omega; \R^{3 \times 3})} \\
    \le & \ \| (\nabla \pi[\widehat{m}_\e(w_\e(x))] - \nabla\pi[m]) (\det(\nabla w_\e))^\frac{1}{2}  \|_{L^2(\Omega; \R^{3 \times 3})} \\
    & + \|\nabla\pi[m] ((\det(\nabla w_\e))^\frac{1}{2}-1 )\|_{L^2(\Omega; \R^{3 \times 3})}.
\end{aligned}
\end{equation}
The first term in \eqref{eq:convergenceofgrad} converges to zero, since
\begin{equation}
\label{eq:1term}
\begin{aligned}
    &\| (\nabla \pi[\widehat{m}_\e(w_\e(x))]-\nabla\pi[m]) (\det(\nabla w_\e))^\frac{1}{2}  \|_{L^2(\Omega; \R^{3 \times 3})} \\
    = & \ \int_\Omega \left|\nabla \pi[\widehat{m}_\e(w_\e(x))]-\nabla\pi[m] \right|^2 \det(\nabla w_\e) \dx \\
    = & \ \int_\Omega \left|\nabla \pi[\widehat{m}_\e(w_\e(x))]-\nabla\pi[m] \right|^2 \left(\det(\nabla w_\e)-1\right) \dx \\ 
    & + \int_\Omega \left|\nabla \pi[\widehat{m}_\e(w_\e(x))]-\nabla\pi[m] \right|^2 \dx. 
\end{aligned}
\end{equation}
Since $\nabla\pi [\widehat{m}_\e(w_\e)]$ strongly converges in $L^2(\Omega, \R^{3 \times 3})$ to  $\nabla\pi[m]$, the second integral in \eqref{eq:1term} strongly converges to $0$ in $L^2(\Omega, \R^{3 \times 3})$. Moreover, due to the strongly convergence of $\det(\nabla w_\e)$ in $L^1(\Omega)$, see \eqref{rs_conv_det}, together with the fact that the regularity of $\mathbb{S}^2$ provides an $L^\infty$-bound on $\nabla \pi[m]$, the first term in \eqref{eq:1term} strongly converges to $0$ in $L^2(\Omega, \R^{3 \times 3})$ as well.
\\
As for the second term in \eqref{eq:convergenceofgrad}, first note that using the subadditivity of the function $t\mapsto \sqrt{t}$, for $t\ge 0$, it follows that
\begin{equation*}
    (\det(\nabla w_\e))^\frac{1}{2}-1 \le (\det(\nabla w_\e)-1)^\frac{1}{2}.
\end{equation*}
This, combined with the $L^\infty$-bound on  $\nabla \pi[m]$, yields to conclude that also the second term in \eqref{eq:convergenceofgrad} strongly converges to $0$ in $L^2(\Omega, \R^{3 \times 3})$. Indeed,
\begin{equation*}
\begin{aligned}
    \|\nabla\pi[m] ((\det(\nabla w_\e))^\frac{1}{2}-1) \|_{L^2(\Omega; \R^{3 \times 3})} 
    &= \int_\Omega |\nabla\pi[m]|^2 ((\det(\nabla w_\e))^\frac{1}{2}-1)^2 \dx \\
    &\le C \int_\Omega (\det(\nabla w_\e)-1) \dx,
\end{aligned}
\end{equation*}
where the right-hand side converges to $0$ in $L^1(\Omega)$, see \eqref{rs_conv_det}. This proves \eqref{conv_aux}, which in turn implies \eqref{conv_con_par}.


Noticing that $\pi[m] = m$ in $\Omega$, we get that $\nabla m \nabla \pi[m]= \nabla m$ for a.e. $x \in \Omega$. Hence, \eqref{conv_con_par} is equivalent to
\begin{equation*}
    \nabla m_\e(w_\e)(\det(\nabla w_\e))^\frac{1}{2} \wts \nabla m + \nabla_y \varphi\nabla \pi[m] \quad \textnormal{ weakly two-scale in } L^2(\Omega \times Y; \R^{3 \times 3}).
\end{equation*}
We prove that the previous convergence is actually strong. Indeed, note that
\begin{equation*}
\begin{aligned}
    & \ \left| \|\nabla m_\e(w_\e)(\det(\nabla w_\e))^\frac{1}{2}\|_{L^2(\Omega; \R^{3 \times 3})} - \|\nabla m + \nabla_y \varphi\nabla \pi[m] \|_{L^2(\Omega \times Y; \R^{3 \times 3})} \right| \\
    \le & \ \|\nabla m_\e(w_\e)(\det(\nabla w_\e))^\frac{1}{2} - (\nabla m + \nabla_y \varphi\nabla \pi[m]) \|_{L^2(\Omega \times Y; \R^{3 \times 3})}
\end{aligned}
\end{equation*}
and that
\begin{equation}
\label{eq_aux:difference}
\begin{aligned}
    & \ \nabla m_\e(w_\e)(\det(\nabla w_\e))^\frac{1}{2} - (\nabla m + \nabla_y \varphi\nabla \pi[m]) \\
    = & \ \nabla \widehat{m}_\e(w_\e)\nabla \pi[\widehat{m}_\e(w_\e)](\det(\nabla w_\e))^\frac{1}{2} - (\nabla m + \nabla_y \varphi)\nabla \pi[m]\\
    = & \ (\nabla\widehat{m}_\e(w_\e) - (\nabla m + \nabla_y\varphi)) \nabla \pi[\widehat{m}_\e(w_\e)] (\det(\nabla w_\e))^\frac{1}{2}\\
    &\quad +(\nabla m + \nabla_y \varphi)(\nabla \pi[\widehat{m}_\e(w_\e)](\det(\nabla w_\e))^\frac{1}{2} - \nabla \pi[m]).
\end{aligned}
\end{equation}
The first term in the right-hand side of \eqref{eq_aux:difference} converges to zero in $L^2(\Omega; \R^{3 \times 3})$. Indeed, due to the regularity of $\s^2$, we get an $L^\infty$-bound on $\nabla \pi$. Furthermore, owing \eqref{def:recoveryseq}, we have that $\nabla \widehat{m}_\e(w_\e) - (\nabla m + \nabla_y \varphi) = \e \nabla_x \varphi(x, \tfrac{x}{\e})$. Hence, we conclude by the Lebesgue's dominated convergence theorem. Likewise, we have that $(\nabla m + \nabla_y \varphi)(\nabla \pi[\widehat{m}_\e(w_\e)](\det(\nabla w_\e))^\frac{1}{2} - \nabla \pi[m]) \to 0$ strongly in $L^2(\Omega; \R^{3 \times 3})$. Therefore, we conclude that 
\begin{equation}
\label{eq_aux:strong_conv}
    \nabla m_\e(w_\e)(\det(\nabla w_\e))^\frac{1}{2}  \sts \nabla m + \nabla_y \varphi\nabla \pi[m] \quad \textnormal{ strongly two-scale in } L^2(\Omega \times Y; \R^{3 \times 3}).
\end{equation}
Thanks to \eqref{eq_aux:strong_conv} and to the regularity of the function $\varphi$, see \eqref{def:recoveryseq}, we have that
\begin{equation}
\label{conv_exch_1}
\begin{aligned}
    \lim_{\e \to 0} &\int_\Omega a\left(\frac{x}{\e}\right) |\nabla m_\e(w_\e)|^2 |\det\nabla w_\e| \dx \\
    = &\iint_{\Omega \times Y} a(y) |\nabla m(x) + \nabla_y \varphi(x, y) \nabla \pi[m(x)]|^2 \dx \dif y.
\end{aligned}
\end{equation}
Now let us define
\begin{equation*}
    m_1(x,y) := \phi[m(x), \nabla m(x)](y),
\end{equation*}
where $\phi$ is the minimum of the homogenized problem in \eqref{Thom} when integrated over $\Omega$. By density, there exists a sequence $(\varphi_k)_k \subset \mathcal{D}(\Omega; W^{1,\infty}_\per(Y; \R^3))$ such that
\begin{equation*}
    \varphi_k \to m_1 \quad \textnormal{ strongly in } L^2(\Omega; H^1_\per(Q; \R^3)).
\end{equation*}
Note that this yields
\begin{equation}
\label{opt_rec_exchange}
    \lim_{k\to\infty} \int_{\Omega\times Y} a(y) |\nabla m + \nabla_y \varphi_k(x, y)\nabla \pi[m]|^2 \dx = \int_{\Omega \times Y} a(y) |\nabla m + \nabla m_1|^2 \dx.
\end{equation}
This conclude the proof, since the limiting functional is equal to the right-hand side of the above equality and this entails \eqref{2sc_limsup}.


\subsection{Recovery of the limiting energy}
\label{rec_seq}

In this section, we are finally going to prove the optimality of the upper bound, showing that the sequences constructed in Subsection \ref{conv_rec_seq}, in particular as in \eqref{def:recoveryseq}, allow us to recover the upper bound needed in order to finish the proof of Theorem \ref{main_thm}.

\begin{proof}[Proof of Theorem \ref{main_thm}]
    Given our rescaled magnetoelastic energy $F_\e$ defined as in \eqref{resc-en}, we have proven already in Subsection \ref{low-bou} the $\Gamma$-liminf inequality with respect to the convergence in Propositions \ref{compact} and \ref{prop:two_scale} with respect to the homogenized functional $F_{\hom}$ defined by \eqref{hommagnetoel}. In order to deal with the upper bound for our family of energy functionals, let us firstly remark, by the subadditivity of the $\limsup$, that
    \begin{equation*}
    \begin{aligned}
        \limsup_{\e \to 0} F_\e (u_\e, m_\e)
        \le & \, \limsup_{\e \to 0} \frac{1}{\e^{2\alpha}} \int_\Omega W \left(\frac{x}{\e}, \Id + \e^\alpha \nabla u_\e(x), m_\e(x + \e^\alpha u(x)) \right) \dx \\
        + & \, \limsup_{\e \to 0} \int_\Omega a \left(\frac{x}{\e}\right) |\nabla m_\e(x + \e^\alpha u_\e(x))|^2 |\det\nabla w_\e| \dx \\
        + & \, \limsup_{\e \to 0} \, \frac{\mu_0}{2} \int_{\R^3} |\nabla \psi_{m_\e}|^2 \dx.
    \end{aligned}
    \end{equation*}
    In Subsection \ref{lim_magnetostatic} we dealt with the magnetostatic self-energy term of the energy, proving its limit. Moreover, in Subsection \ref{conv_rec_seq} we constructed the recovery sequence of admissible states, see \eqref{adm}, converging as in Propositions \ref{compact} and \ref{prop:two_scale} and we proved the optimality of the upper bound for the exchange energy term, see \eqref{opt_rec_exchange}. The last step in order to complete the proof is to deal with the elastic term of the energy.
    \\
    Let us consider the sequences constructed in \eqref{def:recoveryseq}. Keeping in mind the definition of $Q_\hom$, namely \eqref{Qhom}, and the density of the space $C^\infty_\per(\Omega; \R^3)$ in $H^1_\per(\Omega; \R^3)$, we know that for any fixed $\eta > 0$, there exists $\psi\in C^\infty_\per(\Omega; \R^3)$ such that for any $\nu \in \s^2$ we have
    \begin{equation}
    \label{almostminforelasticpart}
        \int_Y Q(y, \nabla u(x) + \nabla_y\psi(y), \nu) \dif y \le Q_\hom(\nabla u, \nu) +\eta.
    \end{equation}
    Employing \eqref{newH5}, we deduce that 
    \begin{equation*}
    \begin{aligned}
        & \limsup_{\e \to 0} \frac{1}{\e^{2\alpha}} \int_\Omega W \left( \frac{x}{\e}, \Id + \e^\alpha \left(\nabla u + \nabla_y \psi \left(\frac{x}{\e}\right)\right), m_\e(w_\e(x)) \right) \dx \\
        \le \, & \limsup_{\e \to 0} \frac{1}{2\e^{2\alpha}} \int_\Omega Q \left(\frac{x}{\e}, \e^\alpha \left(\nabla u(x) + \nabla_y \psi \left(\frac{x}{\e}\right) \right), m_\e(w_\e(x)) \right) \dx \\
        & + \limsup_{\e \to 0} \ r(\e^\alpha) \int_\Omega \left|\nabla u(x) + \nabla_y \psi\left(\frac{x}{\e}\right)\right|^2 \dx \\
        \le \, & \limsup_{\e \to 0} \frac{1}{2}\int_\Omega Q \left(\frac{x}{\e}, \nabla u(x) + \nabla_y \psi\left(\frac{x}{\e}\right), m_\e(w_\e(x)) \right) \dx \\
        = \, & \frac{1}{2} \int_\Omega \int_Y Q(y, \nabla u(x) + \nabla_y\psi(y), m(x)) \dx,
    \end{aligned}
    \end{equation*}
    where in the last equality we used the Riemann-Lebesgue Lemma and the continuity of $Q$ with respect to the third variable alongside \eqref{rs_conv_chim}. Finally, using \eqref{almostminforelasticpart}, it follows that 
    \begin{equation*}
    \begin{aligned}
        \limsup_{\e\to 0} & \int_\Omega W \left(\frac{x}{\e}, \Id + \e^\alpha \left(\nabla u + \nabla_y \psi\left(\frac{x}{\e}\right)\right),    m_\e(w_\e(x)) \right) \dx \\
        \le \, & \frac{1}{2} \int_\Omega \int_Y Q(y, \nabla u(x) + \nabla_y\psi(y), m(x)) \dx \\
        \le \, & \frac{1}{2} \int_\Omega Q_\hom (\nabla u(x), m(x)) \dx + \frac{\eta}{2}|\Omega|,
    \end{aligned}
    \end{equation*}
    and by the arbitrariness of $\eta$ we get the desired upper bound for the elastic part. This concludes the proof.
\end{proof}


\section{On the commutability of homogenization and linearization}
\label{sec:commute}

As already highlighted in the introduction, a direct consequence of Theorem \ref{main_thm} is that by performing simultaneous homogenization and linearization, we identify the same limiting model as by linearizing first and homogenizing afterward. This seems to suggest that homogenization and linearization processes in this setting may be decoupled. We devote the last section of the paper to describe the iterated linearization and homogenization analysis in a bit more details. We begin by stating a linearization result for the energies $(G_{\e})_{\e}$ in \eqref{eq:en-hom}. To this end, for $\e > 0$ and $\delta>0$ we first introduce the class of admissible deformations with scaled displacements:
\begin{equation*}
    \A_\delta := \{ (u, m) \in W^{1,p}_g(\Omega; \R^3) \times H^1((id + \delta u)(\Omega); \s^2) \ : \ id + \delta u \in \Y \},
\end{equation*}
cf. the definition of $\A_\e$ in \eqref{adm}, as well as the rescaled energies
\begin{equation}
\label{resc-en/delta}
\begin{aligned}
    F_{\e}^{\delta}(u,m)
    := \ &\frac{1}{\delta^{2}} \int_\Omega W \left(\frac{x}{\e}, \Id + \delta \nabla u(x), m(x + \delta u(x)) \right) \dx \\
    + &\int_\Omega a \left(\frac{x}{\e}\right) |\nabla m(x + \delta u(x))|^2 |\det(\Id+\delta\nabla u)| \dx + \frac{\mu_0}{2} \int_{\R^3} |\nabla \psi_m|^2 \dx,
\end{aligned}
\end{equation}
for all $(u,m)\in \mathcal{A}_{\delta}$. The limiting linearized energy as $\delta\to 0$ is given by the functional
\begin{equation}
\label{lin-en/delta}
    G_{\e}^{\textrm{lin}}(u,m) := \int_\Omega Q \left(\frac{x}{\e}, \nabla u(x), m(x) \right) \dx + \int_\Omega a \left(\frac{x}{\e}\right) |\nabla m(x)|^2 \dx + \frac{\mu_0}{2} \int_{\R^3} |\nabla \psi_m|^2 \dx,
\end{equation}
for all $(u,m)\in W^{1,p}_g(\Omega; \R^3) \times H^1(\Omega; \mathbb{S}^2)$. The compactness result in this setting is a direct consequence of Proposition \ref{compact}. We report its statement  for the sake of completeness.

\begin{prop}
\label{compact/delta}
     Assume \ref{H1}--\ref{H5}. Let $(u_\delta, m_\delta) \in \A_\delta$ be such that 
    \begin{equation*}
        \sup_{\delta > 0} F_\e^{\delta}(u_\delta, m_\delta) < + \infty,
    \end{equation*}
    where the energy $F_\e^{\delta}$ is defined as in \eqref{resc-en/delta}. Then, there exists $(u,m) \in W^{1,p}_g(\Omega; \R^3) \times H^1(\Omega; \mathbb{S}^2)$ such that, up to subsequences, setting $w_\delta := id + \delta u_\delta$, there holds
    \begin{alignat*}{3}
        w_\delta &\to id &&\qquad \textnormal{strongly in } W^{1,p}(\Omega; \R^3), \\
        u_\delta &\rightharpoonup u &&\qquad \textnormal{weakly in } W^{1,2}(\Omega; \R^3), \\
        \det\nabla w_\delta &\to 1 &&\qquad \textnormal{strongly in } L^\frac{p}{3}(\Omega), \\ 
        |\det\nabla w_\delta|^{-1} &\to 1 &&\qquad \textnormal{strongly in } L^1(\Omega), \\
        m_\delta \circ w_\delta &\rightharpoonup m  &&\qquad \textnormal{weakly in } W^{1,q}(\Omega; \R^3), \\
        m_\delta \circ w_\delta &\to m  &&\qquad \textnormal{strongly in } L^r(\Omega; \R^3) \quad \textnormal{for any } r \in [1,\infty), \\
        \chi_{w_\e(\Omega)} m_\delta &\to \chi_\Omega m &&\qquad \textnormal{strongly in } L^r(\R^3; \R^3) \quad \textnormal{for any } r \in [1,\infty), \\
        \chi_{w_\delta(\Omega)} \nabla m_\delta &\rightharpoonup \chi_\Omega \nabla m &&\qquad \textnormal{weakly in } L^2(\R^3; \R^{3 \times 3}),
    \end{alignat*}
    where $q := \frac{2ps}{p+s(p+2)} \in (1, \tfrac{2p}{p+2})$, and $s$ is the exponent in \eqref{5.15a}.
    \qed
\end{prop}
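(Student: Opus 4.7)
The plan is to replicate line by line the argument of Proposition \ref{compact}, substituting $\e^{\alpha}$ with $\delta$ and treating the periodicity $\e$ as a fixed positive parameter throughout. The key observation is that every step in the proof of Proposition \ref{compact} rests on the uniform (in $y$) coercivity and growth assumptions \ref{H2}--\ref{H5} on $W(y,\cdot,\cdot)$ and on the bound \eqref{grw-a} on $a(\cdot)$, none of which depend on $\e$. Consequently, the compactness mechanism is entirely independent of whether the displacement scale is coupled to the periodicity scale.

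Concretely, I would first apply the geometric rigidity Theorem \ref{thm_rigest} to $w_{\delta} = \id + \delta u_{\delta}$: coercivity \ref{H2} uniformly in $y=x/\e$ yields
\[
\int_\Omega g_p(\dist(\Id+\delta\nabla u_\delta; SO(3))) \dx \le C\,\delta^{2}\,F_\e^\delta(u_\delta,m_\delta),
\]
from which, reproducing the argument of \cite[Proposition 3.4]{DNP02} for the boundary term as in \eqref{rot-id}, the analogues of \eqref{est_1}--\eqref{est_2} follow with $\e^\alpha$ replaced by $\delta$. This gives weak precompactness of $(u_\delta)$ in $W^{1,2}(\Omega;\R^3)$ and strong convergence $w_\delta \to \id$ in $W^{1,p}(\Omega;\R^3)$ at rate $\delta^{2/p}$. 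The convergences $\det\nabla w_\delta \to 1$ in $L^{p/3}(\Omega)$ and $|\det\nabla w_\delta|^{-1}\to 1$ in $L^1(\Omega)$ then follow exactly as in Steps 2--3 of Proposition \ref{compact}, using the multilinear expansion of the determinant and the De la Vallée--Poussin argument applied to $\tilde h(1/\det\nabla w_\delta) = h(\det\nabla w_\delta)$, which is controlled uniformly by $\delta^{2}F_\e^\delta$ thanks to \ref{H2} and \eqref{grwt-h}.

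The magnetization bounds are then produced as in Steps 4--7. Combining the $L^{p}$-bound on $\nabla w_\delta$, the $L^{s}$-bound on $|\det\nabla w_\delta|^{-1}$, the $L^{2}$-bound on $\nabla m_\delta\circ w_\delta |\det\nabla w_\delta|^{1/2}$ coming from the exchange term (which is uniformly bounded below by $C_1$, see \eqref{grw-a}), and the constraint $|m_\delta\circ w_\delta|=1$, H\"older's inequality yields $\|m_\delta\circ w_\delta\|_{W^{1,q}}\le C$ with $q = \tfrac{2ps}{p+s(p+2)}$. Weak $W^{1,q}$ and strong $L^r$ convergence for all $r<\infty$ then follow from Rellich--Kondrachov and Lemma \ref{l5.5}. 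The $L^{r}(\R^{3})$ convergence of the extensions $\chi_{w_\delta(\Omega)}m_\delta$ is obtained as in Step 6, via the Lusin approximation argument coupled with the uniform $C^{0}$-convergence $w_\delta\to\id$ (which now holds at rate $\delta^{2/p}$), and the weak $L^{2}(\R^{3})$ convergence of $\chi_{w_\delta(\Omega)}\nabla m_\delta$ as in Step 7. No step introduces a new obstacle: the only thing to verify in each substitution is that constants coming from $W(x/\e,\cdot,\cdot)$ and $a(x/\e)$ can be chosen uniformly in $\e$, which is precisely the content of \ref{H2} and \eqref{grw-a}. In this sense, Proposition \ref{compact/delta} really is a direct corollary of Proposition \ref{compact}.
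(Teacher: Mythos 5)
Your proposal is correct and coincides with the paper's own treatment: the paper proves Proposition \ref{compact/delta} simply by observing that it is a direct consequence of (i.e., the proof of) Proposition \ref{compact}, with $\e^\alpha$ replaced by $\delta$ and the period $\e$ held fixed, exactly as you argue. Your added remark that all constants stem from the $y$-uniform bounds in \ref{H2}, \ref{H5} and \eqref{grw-a}, hence are independent of $\e$, is precisely the point that makes the substitution legitimate.
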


The linearization result reads then as follows.

\begin{prop}
\label{prop:only-lin}
    Assume \ref{H1}-\ref{H5}. Then, the family of magnetoelastic energies $(F_\e^{\delta})_\delta$, given by \eqref{resc-en/delta}, $\Gamma$-converges with respect to the convergence in Proposition \ref{compact/delta} to the linearized functional $G_\e^{\textrm{lim}}$ defined by \eqref{lin-en/delta}.
\end{prop}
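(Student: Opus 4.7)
The proof follows the blueprint of Theorem \ref{main_thm} with $\e>0$ frozen and only $\delta\to 0$, so the two-scale machinery is bypassed. Compactness is granted by Proposition \ref{compact/delta}, whose proof is obtained from that of Proposition \ref{compact} by replacing the role of $\e^\alpha$ by $\delta$. The magnetostatic self-energy is handled exactly as in Subsection \ref{lim_magnetostatic}: the strong convergence $\chi_{w_\delta(\Omega)}m_\delta\to\chi_\Omega m$ in $L^2(\R^3;\R^3)$ combined with the linearity of the Maxwell equation and the a priori estimate in Lemma \ref{lmm:sol_magneto_self} yields $\nabla\psi_{m_\delta}\to\nabla\psi_m$ strongly in $L^2(\R^3;\R^3)$, hence an equality in the limit.

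For the $\Gamma$-liminf of the elastic term, I would mimic Step 1 of Subsection \ref{low-bou}. The geometric rigidity estimate Theorem \ref{thm_rigest} applied on $w_\delta$ produces the $L^2$-bound on $\nabla u_\delta$, hence a weak limit $\nabla u_\delta\rightharpoonup\nabla u$ in $L^2$. I truncate on $E_\delta:=\{|\nabla u_\delta|\le\delta^{-1/2}\}$ and invoke Lemma \ref{lmm-neuk} (whose proof uses only $\chi_{E_\delta}\to 1$ in measure and makes no reference to a scale $\e$) to retain $\chi_{E_\delta}\nabla u_\delta\rightharpoonup\nabla u$ weakly in $L^2$. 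On $E_\delta$ one has $\|\delta\chi_{E_\delta}\nabla u_\delta\|_\infty\le\delta^{1/2}\to 0$, so the Taylor expansion \ref{H5} permits to replace $\delta^{-2}W(\cdot,\Id+\delta\chi_{E_\delta}\nabla u_\delta,\cdot)$ by $\tfrac12 Q(\cdot,\chi_{E_\delta}\nabla u_\delta,\cdot)$ up to a vanishing remainder. Joint weak-strong lower semicontinuity of the Carath\'eodory quadratic form $(A,\nu)\mapsto\int_\Omega Q(x/\e,A,\nu)\dx$, along with the strong convergence $m_\delta\circ w_\delta\to m$ in every $L^r$ from Proposition \ref{compact/delta}, closes the bound. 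For the exchange term one needs the single-scale analogue of Proposition \ref{prop:two_scale}, namely
\begin{equation*}
  \nabla m_\delta\circ w_\delta\,(\det\nabla w_\delta)^{1/2}\rightharpoonup\nabla m\quad\text{weakly in }L^2(\Omega;\R^{3\times 3}).
\end{equation*}
Its proof is a simplification of that of Proposition \ref{prop:two_scale}: Lemmas \ref{lmm:conv_geps}, \ref{l5.6} and \ref{l5.5} carry over verbatim, and one tests against $\psi(x)$ rather than $\psi(x,x/\e)$. A completion-of-the-square argument as in Step 2 of Subsection \ref{low-bou} then delivers $\liminf_\delta\int_\Omega a(x/\e)|\nabla m_\delta\circ w_\delta|^2\det\nabla w_\delta\dx\ge\int_\Omega a(x/\e)|\nabla m|^2\dx$.

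For the recovery sequence, fix $(u,m)\in W^{1,p}_g(\Omega;\R^3)\times H^1(\Omega;\s^2)$ and, by a density argument in the first factor, reduce to $u\in W^{1,p}_g\cap W^{2,\infty}$. I set $u_\delta:=u$, $w_\delta:=\Id+\delta u$ and $m_\delta:=m\circ w_\delta^{-1}$: for $\delta$ small, Theorem \ref{thm:injective} guarantees that $w_\delta$ is a bi-Lipschitz homeomorphism with positive determinant bounded below, so $w_\delta\in\Y$ and $(u_\delta,m_\delta)\in\A_\delta$, while the Heisenberg constraint $|m_\delta\circ w_\delta|=|m|=1$ is automatic. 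All convergences of Proposition \ref{compact/delta} are then immediate. Taylor expansion \ref{H5} together with $\|\delta\nabla u\|_\infty\to 0$ and dominated convergence yields $\delta^{-2}\int_\Omega W(x/\e,\Id+\delta\nabla u,m)\dx\to\tfrac12\int_\Omega Q(x/\e,\nabla u,m)\dx$. The chain rule gives $(\nabla m_\delta)\circ w_\delta=\nabla m\,(\nabla w_\delta)^{-1}$, so the exchange integral equals $\int_\Omega a(x/\e)|\nabla m\,(\nabla w_\delta)^{-1}|^2\det\nabla w_\delta\dx$, which converges to $\int_\Omega a(x/\e)|\nabla m|^2\dx$ by dominated convergence using $(\nabla w_\delta)^{-1}\to\Id$ and $\det\nabla w_\delta\to 1$ in $L^\infty$. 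The magnetostatic term is handled as above.

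The main obstacle, as in Theorem \ref{main_thm}, is bridging the Eulerian definition of the magnetic contributions and the Lagrangian framework in which compactness is available; in the liminf this is reconciled through the pullback $(\det\nabla w_\delta)^{1/2}$ in the one-scale companion of Proposition \ref{prop:two_scale}, and in the limsup it is trivialized by the explicit choice $m_\delta:=m\circ w_\delta^{-1}$ that enforces the identity $m_\delta\circ w_\delta=m$. With the periodic parameter $\e$ frozen, every analytical step is strictly simpler than in Theorem \ref{main_thm}.
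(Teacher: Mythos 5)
Your proposal is correct and follows precisely the route the paper itself indicates for this statement (whose proof it omits): a direct adaptation of the linearization arguments of \cite{AKM24} along the lines of the proof of Theorem \ref{main_thm} with $\e$ frozen, your only notable simplifications being the single-scale replacement of Proposition \ref{prop:two_scale} and the recovery choice $m_\delta:=m\circ w_\delta^{-1}$, which dispenses with the projection $\pi$ since no corrector is needed. Note that your limit of the elastic term correctly carries the prefactor $\tfrac12$ coming from \ref{H5}, which indicates that the missing $\tfrac12$ in front of $Q$ in \eqref{lin-en/delta} is a typographical slip in the paper, as consistency with \eqref{hommagnetoel} and the subsequent homogenization proposition also requires it.
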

\begin{proof}
    We omit the proof of this proposition, for it follows by a direct adaptation of the arguments in \cite{AKM24}, along the lines of the proof of Theorem \ref{main_thm}.
\end{proof}

The homogenization result of the energies $(G_\e^{\textrm{lin}})_\e$ follows then as a corollary of the analysis in \cite{DDF20}. We state it below (without proof) for convenience of the reader.

\begin{prop}
    Assume \ref{H1}-\ref{H5}. Then, the energies $(G_\e^{\textrm{lin}})_\e$ $\Gamma$-converge with respect to the weak topology in $W^{1,p}(\Omega;\R^3)\times H^1(\Omega;\mathbb{S}^2)$ to the homogenized functional $F_{\hom}$ in \eqref{hommagnetoel}.
    \qed
\end{prop}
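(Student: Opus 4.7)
The plan is to establish this $\Gamma$-convergence as a classical periodic homogenization statement for a quadratic variational problem with a manifold-valued constraint, following \cite{DDF20}. The argument decomposes along the three additive pieces of $G_\e^{\textrm{lin}}$: the quadratic elastic term, the exchange term carrying the $\mathbb{S}^2$-constraint, and the magnetostatic self-energy. For an equibounded energy sequence $(u_\e,m_\e)$, coercivity of $Q$ (inherited from \ref{H2} via the Taylor expansion \ref{H5}) together with the Dirichlet datum on $\gamma$ yields $u_\e\rightharpoonup u$ in $W^{1,2}_g$, while $a\ge C_1>0$ and the saturation $|m_\e|=1$ give $m_\e\rightharpoonup m$ in $H^1(\Omega;\s^2)$. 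Up to a subsequence, two-scale compactness (\cite[Theorem 9]{LNW02}) produces correctors $u_1, m_1 \in L^2(\Omega; H^1_\per(Y;\R^3))$ with $\nabla u_\e\wts \nabla u+\nabla_y u_1$ and $\nabla m_\e\wts \nabla m+\nabla_y m_1$.

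For the $\Gamma$-liminf inequality I would use term-wise lower semicontinuity of the nonnegative quadratic integrands under weak two-scale convergence, see e.g.\ \cite[Proposition 3.2.2]{N10}, exploiting the strong $L^r$ convergence of $m_\e$ from Rellich-Kondrachov to handle the nonlinear $m$-dependence of $Q$. Minimising the resulting $\iint_{\Omega\times Y} Q(y,\nabla u+\nabla_y u_1, m)\,\dx\dif y$ over $u_1$ reproduces $\int_\Omega Q_\hom(\nabla u, m)\dx$; the exchange term is handled analogously, noting that for $\s^2$-valued maps the unconstrained cell minimization in \eqref{Thom} coincides with the tangent-bundle-constrained one (\cite[Section 3(b) and Remark 3.3]{AD15}), so the formula applies at $\nabla m(x)\in T_{m(x)}\s^2$. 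For the magnetostatic term, the strong $L^2$ convergence $\chi_\Omega m_\e\to \chi_\Omega m$ combined with the linear stability of the Maxwell equation in Lemma \ref{lmm:sol_magneto_self} gives in fact full convergence, exactly as in Subsection \ref{lim_magnetostatic}.

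For the $\Gamma$-limsup I would construct a recovery sequence by oscillatory perturbation, analogous to but simpler than Subsection \ref{conv_rec_seq}, since here there is no composition with a deformation and no $\det\nabla w_\e$ factor. Given $(u,m) \in W^{1,p}_g(\Omega;\R^3)\times H^1(\Omega;\s^2)$ and $\eta>0$, pick by density $\psi_\eta\in C^\infty_\per(Y;\R^3)$ and $\varphi_\eta\in\mathcal{D}(\Omega; W^{1,\infty}_\per(Y;\R^3))$ that $\eta$-almost attain the cell infima defining $Q_\hom(\nabla u(x), m(x))$ and $T_\hom(\nabla m(x))$, and set
\[
u_\e(x) := u(x) + \e\,\psi_\eta(x/\e), \qquad m_\e(x) := \pi\bigl[m(x) + \e\,\varphi_\eta(x, x/\e)\bigr],
\]
with $\pi$ the nearest-point projection onto $\s^2$ in a tubular neighbourhood. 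A computation parallel to \eqref{conv_con_par}-\eqref{opt_rec_exchange} combined with the Riemann-Lebesgue lemma gives convergence of $G_\e^{\textrm{lin}}(u_\e, m_\e)$ to the energy with the correctors plugged in; letting $\eta\downarrow 0$ and diagonalising recovers $F_\hom(u,m)$. The main delicate point, as in the proof of Theorem \ref{main_thm}, is ensuring that $m_\e$ satisfies the $\s^2$-constraint while still converging to the intended two-scale limit, which the projection $\pi$ resolves through the chain-rule identities \eqref{conv_aux}-\eqref{eq_aux:strong_conv}; the remaining passages are routine periodic homogenization.
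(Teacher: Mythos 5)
Your overall route is correct and is, in substance, the standard two-scale homogenization argument for a quadratic, $\mathbb{S}^2$-constrained functional; the paper itself gives no proof of this proposition at all, stating it as a corollary of the analysis in \cite{DDF20}. So the comparison is this: you reconstruct directly the machinery that the cited reference (and the paper's own proof of Theorem \ref{main_thm}, in its simpler, deformation-free form) provides — two-scale compactness of $\nabla u_\e$ and $\nabla m_\e$, lower semicontinuity of the quadratic integrands along weak two-scale convergence combined with strong $L^r$ convergence of $m_\e$ to handle the $\nu$-dependence of $Q$, continuity of the magnetostatic term via Lemma \ref{lmm:sol_magneto_self}, and an oscillatory recovery sequence projected onto $\s^2$. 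This buys a self-contained proof where the paper merely defers, at the cost of re-proving what \cite{DDF20} already supplies for the exchange and stray-field parts.

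Two points in your sketch deserve correction. First, the compactness step is both unjustified and unnecessary: \ref{H5} only gives the upper bound $Q(y,F,\nu)\le C|F|^2$ and nonnegativity, so "coercivity of $Q$" is not among the hypotheses (at best one can extract control of $|\mathrm{sym}\,F|^2$ from \ref{H2}, which would then require Korn's inequality); but since the $\Gamma$-convergence is stated with respect to the weak topology of $W^{1,p}(\Omega;\R^3)\times H^1(\Omega;\s^2)$, the liminf and limsup inequalities are to be proven along sequences that are already weakly convergent there, and no compactness claim is needed. Second, in the recovery sequence for the elastic term you take a single $x$-independent $\psi_\eta\in C^\infty_\per(Y;\R^3)$ that "$\eta$-almost attains" the cell infimum defining $Q_\hom(\nabla u(x),m(x))$; since that infimum depends on $x$ through $\nabla u(x)$ and $m(x)$, such a choice is not available unless these are constant. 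You should instead use an $x$-dependent corrector $\psi_\eta\in \mathcal{D}(\Omega;C^\infty_\per(Y;\R^3))$ approximating the measurable minimizer $u_1(x,\cdot)$ in $L^2(\Omega;H^1_\per(Y;\R^3))$ (exactly as you already do for the exchange corrector $\varphi_\eta$), together with a density argument in $(u,m)$, before diagonalising in $\eta$. With these repairs the argument goes through.
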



\section*{Acknowledgements}
The research of E.D, L. D'E. and S.R. has been funded by the Austrian Science Fund (FWF) through grants \href{https://www.doi.org/10.55776/F65}{10.55776/F65}, \href{https://www.doi.org/10.55776/Y1292}{10.55776/Y1292}, \href{https://www.doi.org/10.55776/P35359}{10.55776/P35359}, \href{https://www.doi.org/10.55776/ESP1887024}{10.55776/ESP1887024}, and \href{https://www.doi.org/10.55776/F100800}{10.55776/F100800}. For open-access purposes, the authors have applied a CC BY public copyright license to any author-accepted manuscript version arising from this submission.



\end{document}